\newtheorem{theorem}{Theorem}
\newtheorem{lemma}{Lemma}
\newtheorem{remark}{Remark}
\theoremstyle{definition}
\newtheorem{example}{Example}
\numberwithin{equation}{section}
\tikzset{
	start/.style = {draw, rounded corners, fill=blue!15, text width=6em, align=center},
	proc/.style  = {draw, fill=yellow!15, text width=6em, align=center},
	dec/.style   = {draw, diamond, aspect=2, fill=green!15, text width=5em, align=center},
	end/.style   = {draw, rounded corners, fill=blue!15, text width=6em, align=center},
	arrow/.style = {-Stealth, thick}
}
\title{A Spectral Localization Method for Time-Fractional Integro-Differential Equations with Nonsmooth Data\thanks{This work was partially supported by the National Natural Science Foundation of China under grants 42450192 and 12071343, Natural Science Basic Research Program of Shaanxi (Program No. 2025JC-YBMS-012), and basic research fund of Tianjin University under grant 2025XJ21-0010.}}
\author{
	Lijing Zhao$^{a,b,}$\thanks{E-mail addresses: zhaolj17@nwpu.edu.cn.}
	~~
	Rui Zhao$^{a,b,}$\thanks{E-mail addresses: zhaorui\_rrr@mail.nwpu.edu.cn.}
	~~
	Wenyi Tian$^{c,}$\thanks{Corresponding author, E-mail addresses: twymath@gmail.com.}
	~~
	Yufeng Nie$^{a,b,}$\thanks{E-mail addresses: yfnie@nwpu.edu.cn.}
	\\[10pt]
	\small{$^{a}$ Research Center for Computational Science, Northwestern Polytechnical University, Xi'an 710129, China
	}\\[5pt]
	\small{$^b$ Xi'an Key Laboratory of Scientific Computation and Applied Statistics,}\\
	\small{Northwestern Polytechnical University, Xi'an 710129, China}\\[5pt]
	\small{$^c$ Center for Applied Mathematics and KL-AAGDM,
		Tianjin University, Tianjin 300072, China}\\
}
\date{}
\begin{document}
	\maketitle
	\begin{abstract}
		In this work, we develop a localized numerical scheme with low regularity requirements for solving time-fractional integro-differential equations. First, a fully discrete numerical scheme is constructed. Specifically, for temporal discretization, we employ the contour integral method (CIM) with parameterized hyperbolic contours to approximate the nonlocal operators. For spatial discretization, the standard piecewise linear Galerkin finite element method (FEM) is used. We then provide a rigorous error analysis, demonstrating that the proposed scheme achieves high accuracy even for problems with nonsmooth/vanishing initial values or low-regularity solutions, featuring spectral accuracy in time and second-order convergence in space. Finally, a series of numerical experiments in both 1-D and 2-D validate the theoretical findings and confirm that the algorithm combines the advantages of spectral accuracy, low computational cost, and efficient memory usage.
	\end{abstract}
	
	
	{\bf Keywords:} Time-fractional integro-differential equation; Nonsmooth/vanishing data; Contour integral method; Localization scheme; Error estimates
	
	{\bf AMS subject classifications:} 65R10, 65M15, 65M60, 35R09, 35R05
	
	%
	%
	%
	%
	%
	%
	
	\section{Introduction}
	
	In this paper, we consider the following initial-boundary value problem for time-fractional integro-differential equation with the unknown $u:=u(x,t)$:
	\begin{equation} \label{eq1.1}
		\left\{
		\begin{aligned}
			&{_0^C}\!D_t^\beta u + Au + \int_0^t \kappa_\alpha(t-s)Au(s)\,ds = f, && (x,t) \in \Omega \times (0,T], \\
			&u(x,t) = 0, && (x,t) \in \partial\Omega \times (0,T], \\
			&u(x,0) = u_0(x), && x \in \Omega,
		\end{aligned}\right.
	\end{equation}
	where ${_0^C}\!D_{t}^{\beta}$ with $\beta \in (0,1)$ denotes the Caputo fractional derivative given by
	\begin{equation} \label{eq1.2}
		{_0^C}\!D_{t}^{\beta}u(x,t) := \frac{1}{\Gamma(1-\beta)} \int_{0}^{t} \frac{\partial u(x,\tau)}{\partial \tau} (t-\tau)^{-\beta}\,d\tau,
	\end{equation}
	$\kappa_{\alpha}(t)$ with $\alpha \in (0,1)$ is a memory kernel function defined by
	$$
	\kappa_{\alpha}(t) := {t^{\alpha-1}}/{\Gamma(\alpha)},
	$$
	and $\Gamma(s) = \int_{0}^{\infty} t^{s-1} e^{-t}\,dt$ stands for the Euler Gamma function. In \eqref{eq1.1}, $\Omega$ is a bounded convex polygonal domain in $\mathbb{R}^n(n\geq 1)$ with boundary $\partial\Omega$, $T>0$ is a fixed final time, $A := -\sum_{i=1}^{d} \frac{\partial^2}{\partial x_i^2}$ represents the negative Laplace operator, the function $f=f(x,t)$ is the source term, and $u_0(x)$ is the initial data.
	
	This type of equation, with the nonlocal operator ${_0^C}\!D_{t}^{\beta}$, depicts the anomalous motion characteristics of particles in sub-diffusion process (where the mean squared displacement follows a power law $\langle x^2(t) \rangle \propto t^\beta$ with $\beta \in (0,1)$), and can  better describe the physical phenomena of anomalous diffusion, including the transport of proteins within the cytoskeleton and the diffusion of pollutants in porous media \cite{2008Some}. Additionally, the memory kernel function $\kappa_{\alpha}(t)$ demonstrates a power-law decay characteristics, effectively models the long-time scale dynamics of processes such as the relaxation response in electrochemical systems \cite{Riewe1997Mechanics}. Moreover, from a microscopic perspective, $u(x,t)$ describes the probability density of particles at time $t$ and position $x$, while at the macroscopic scale, it corresponds to the material concentration, and $f(x,t)$ represents the external force that is artificially applied or given. Such integro-differential equation can better describe physical processes that exhibit anomalous diffusion characteristics and long-range temporal dependence, such as telomere motion in cells \cite{mosqueira2020antibody}, diffusion in porous catalysts \cite{caputo2000models}, and transport in nuclear reactor materials \cite{zaeri2017fractional}. Owing to its certain application potential, problem (\ref{eq1.1}) and its nonlinear variants have attracted considerable research attention, with wide-ranging studies spanning both theoretical analysis and numerical computation.
	
	For theoretical analysis, the authors in \cite{Lazarov2015An} studied the existence and regularity of the solutions to problem (\ref{eq1.1}) in Sobolev spaces with Riemann-Liouville fractional derivatives. Regularity estimates for both homogeneous and nonhomogeneous versions of equation (\ref{eq1.1}) were established in \cite{mahata2023nonsmooth} under a range of regularity assumptions on the initial data and source term, with particular emphasis on nonsmooth initial data; corresponding stability estimates were also provided. The authors of \cite{krasnoschok2017semilinear} investigated the semilinear variant of (\ref{eq1.1}) in one dimension, establishing the existence and regularity of solutions within fractional H\"older spaces. These results were later extended to the multidimensional case in \cite{krasnoschok2019semilinear}, which also provided a further analysis of the well-posedness of the solution. Their contributions play a key role in advancing the theoretical understanding of such equations, leading to a relatively well-developed foundation for the analysis.
	
	The numerical computation focus on how to discretize the nonlocal operators, including both the time-fractional derivative operator and the integral operator with a memory kernel. As for the time-fractional derivative term, the popular approximations mainly include the L1 scheme \cite{2006A,lin2007finite}, convolution quadrature method \cite{cuesta2006convolution,deng2019high}, and the improved methods based on these approaches \cite{li2021second,stynes2017error,mustapha2020l1}. For numerically solving problem (\ref{eq1.1}) and its variants:
	\begin{equation} \label{eq1.3}
		D_t^\beta u(x,t) + \gamma Au(x,t) + \int_0^t \kappa_\alpha(t-s)Au(x,s)\,ds = f(x,t),
	\end{equation}
	where $\gamma \geq 0$ and $D_t^\beta$ is the Riemann-Liouville or Caputo fractional derivative, the authors in \cite{mohebbi2017compact} applied a compact finite difference scheme to solve (\ref{eq1.3}) with a particular weakly singular kernel for $\alpha= 1/2$, demonstrating that the method achieves an accuracy of $O(\tau+h^4)$. Based on this, a compact finite difference scheme was proposed in \cite{luo2022numerical} that enhances the temporal convergence order to $O(\tau^2+h^4)$ by using the weighted and shifted Gr\"{u}nwald formula and compact difference operators. Moreover, the authors in \cite{zhou2020alternating} developed an Alternating Direction Implicit (ADI) difference scheme for solving (\ref{eq1.3}), where the Caputo fractional derivative is discretized by the L1 formula, while the integral term is approximated via the convolution quadrature. The proposed scheme is shown to achieve a convergence rate of \( O(\tau^{\min\{2-\beta, 1+\alpha\}} + h^2) \). Then a compact L1-ADI scheme was developed in \cite{wang2021sharp}, which improves the temporal convergence by employing the trapezoidal Proportional-Integral (PI) rule. In addition, the authors of \cite{2009Discontinuous} and \cite{McLean2006ASA} investigated the special case of (\ref{eq1.3}) with $\beta = 1$ and $\gamma = 0$. Under suitable assumptions on the data and for convex or sufficiently smooth spatial domains, they demonstrated that the error converges at the rate of $O(\tau^2+h^2)$.
	
	In the aforementioned studies, the proposed algorithms are inherently nonlocal, leading to a substantial increase in both computational complexity and storage demands as the problem scale expands. Moreover, most of these methods achieve at most second-order convergence in time. Another limitation is their reliance on high solution regularity -- for instance, \( u(t) \in C^2[0,T] \) is required in \cite{zhou2020alternating} -- an assumption often inconsistent with real-world applications. In practice, nonsmooth initial data are frequently encountered, and the actual regularity of solutions tends to be limited. To overcome these challenges, the authors in \cite{mahata2023nonsmooth} presented an analysis of the L1 scheme applied to problem (\ref{eq1.1}), deriving optimal error estimates that remain valid even for nonsmooth initial data such as \( u_0 \in L^2(\Omega)\).
	
	Over the past two decades, a technique known as the Contour Integral Method (CIM), which is based on Laplace transforms and contour integration, has gained prominence \cite{lopez2006spectral,in2011contour,lee2013laplace}. The fundamental idea behind this approach can be traced back to 1979, when Talbot provided a systematic account of its origins and development \cite{talbot1979accurate}. This method expresses the solution of the differential equation via an inverse Laplace transform, which is then approximated by numerical integration along a suitably chosen contour. The idea was first applied to parabolic equations by \cite{sheen2003parallel}, and later extended to fractional-order evolution equations in \cite{mclean2010maximum}. Subsequently, the CIM has been extended to fractional differential equations. For instance, \cite{lee2006parallel} introduced a parallel Laplace transform-based approach for backward parabolic equations; \cite{in2011contour} developd an improved CIM for solving the Black-Scholes and Heston equations; a parabolic contour CIM was applied in \cite{li2022exponential} to handle nonlinear subdiffusion equations with nonsmooth initial data. Collectively, these studies demonstrate that, compared with traditional numerical methods, CIM offers several key advantages: (1) it accommodates solutions with low regularity; (2) the method is localized, meaning the solution at each time level can be computed independently without historical data, which facilitates efficient parallelization; (3) it achieves spectral accuracy in time.
	
	The aim of this paper is to develop a high-performance numerical algorithm -- a localization method -- for the nonlocal problem (\ref{eq1.1}). This method is designed to handle solutions with low regularity while achieving high accuracy, thereby addressing challenges related to nonlocality and nonsmoothness. Specifically, for temporal discretization, we employ the contour integral method (CIM), which represents the solution via an inverse Laplace transform expressed as a contour integral along the left branch of a parameterized hyperbolic contour. This integral is then approximated using the midpoint rule, leading to a semi-discrete CIM scheme in time. For spatial discretization, the piecewise linear finite element method (FEM) is applied. Additionally, we conduct a rigorous error analysis of the fully discrete scheme; the main theoretical results are summarized in Theorems \ref{thm3} and \ref{thm4}.
	
	The main contributions of this work are as follows. We address the nonlocality inherent in both the integral and differential operators, as well as the low regularity of solutions to problem (\ref{eq1.1}):
	\begin{itemize}
		\item An efficient numerical scheme is developed, which uses a parameterized hyperbolic contour CIM for temporal discretization to localize the originally nonlocal problem.
		
		\item The proposed algorithm imposes very low regularity requirements on the solution, enabling robust performance even in the presence of nonsmooth initial data and low regularity solution.
		
		\item Rigorous error analysis demonstrates that the method achieves spectral accuracy in time and second-order convergence in space.
	\end{itemize}
	
	This paper is structured as follows. In Section \ref{section2}, we introduce a fully discrete numerical scheme for problem (\ref{eq1.1}), utilizing the contour integral method (CIM) for semi-discretization in time. Section \ref{section3} is devoted to error analysis, covering both homogeneous problems with nonsmooth initial data and nonhomogeneous problems with vanishing initial data. In Section \ref{section4}, several numerical experiments in 1-D and 2-D are presented, featuring nonsmooth and vanishing initial data, as well as nonsmooth solution, to validate the high performance of the proposed algorithm and to corroborate the theoretical findings. Finally, concluding remarks are provided in Section \ref{section5}.
	
	Throughout this paper \(C\) (with or without a subscript) denotes a positive generic constant that is independent of the solution \(u\), initial data \(u_0\), source function \(f\), time-step size \(\tau\), and mesh size \(h\), but may depend on other parameters $\alpha,\beta$ and \(T\).
	
	\section{Discretization of problem (\ref{eq1.1})}
	\label{section2}
	
	In this section, we introduce the fundamental concepts of the CIM and employ it for semi-discretization in time. Spatial semi-discretization is then carried out using the piecewise linear FEM. Building on these semi-discrete formulations, we present a fully discrete scheme for problem (\ref{eq1.1}) and conclude the section with an accelerated algorithm for its efficient implementation.
	
	\subsection{Contour Integral Method (CIM)}
	
	Let \( u(t) \) be a real-valued function. Under certain conditions-specifically, if there exist constants \( \sigma_0 > 0 \) and \( M > 0 \) such that \( |u(t)| \leq M e^{\sigma_0 t} \), then its Laplace transform \( \widehat{u}(z) := \mathcal{L}\{u(t)\}(z) \) exists for \( \operatorname{Re}(z) > \sigma_0 \). In that case, \( u(t) \) can be expressed via the inverse Laplace transform as
	\begin{equation} \label{eq2.1}
		u(t) = \frac{1}{2\pi i} \int_{\sigma_0-i\infty}^{\sigma_0+i\infty} e^{zt} \widehat{u}(z)\,dz, \quad \operatorname{Re}(z)>\sigma_0.
	\end{equation}
	To compute \( u(t) \), it then suffices to choose a suitable numerical method for approximating this contour integral. In order to avoid the high-frequency oscillations of the exponential factor along the vertical line \( z = \sigma_0 + iy \), where \( e^{zt} = e^{\sigma_0 t}\cdot(\cos(yt)+i\sin(yt)) \) for \( -\infty < y < +\infty \), we employ the CIM, which offers both efficiency and simplicity.
	
	The key idea of the CIM is to perform an equivalent transformation on the integral path: according to Cauchy's Integral Theorem, the original path of integration for the inverse Laplace transform, which is a vertical line extending from $-\infty$ to $+\infty$, can be transformed into a new contour $\Gamma$, so that the integral (\ref{eq2.1}) can be rewritten as
	\begin{equation} \label{eq2.3}
		u(t) = \frac{1}{2\pi i} \int_{\Gamma} e^{zt} \widehat{u}(z)\,dz.
	\end{equation}
	Here, $\Gamma$ can be any contour in the complex plane that starts from $-\infty$ in the third quadrant, encircles the origin, 
	and returns to $-\infty$ in the second quadrant, as shown in Figure \ref{contour}, such that all the singularities of $\widehat{u}(z)$ lie in the left half plane of it. At this point, the exponential factor $e^{zt}$ leads to rapid decay of the integrand along the contour $\Gamma$, which will greatly enhance the efficiency and accuracy of the quadrature.
	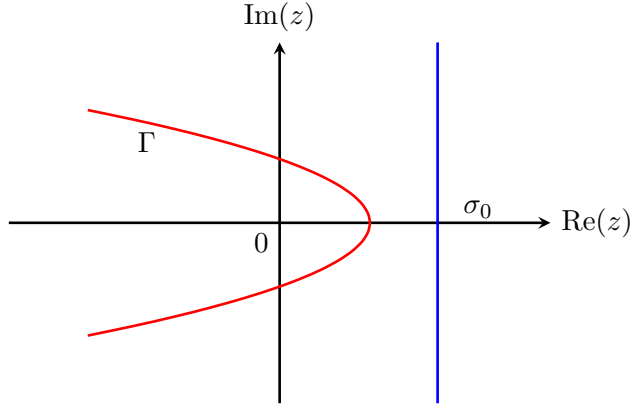
\begin{figure}[h!]
		\centering
		\begin{tikzpicture}[>=stealth,scale=0.6,line width=1.0pt]
			
			\draw[->] (-6,0) -- (6,0) node[right] {$\operatorname{Re}(z)$};
			\draw[->] (0,-4) -- (0,4) node[above] {$\operatorname{Im}(z)$};
			
			\node[below left] at (0,0) {$0$};
			
			\draw[red,domain=-2.5:2.5,smooth,variable=\y]
			plot ({2-\y*\y},{\y});
			\node[left] at (-2.5,1.8) {$\Gamma$};
			
			\draw[blue] (3.5,-4) -- (3.5,4);
			\node[right=2mm] at (3.5,0.3) {$\sigma_0$};
			
		\end{tikzpicture}
		\caption{Illustration of the integration path transformation.}
		\label{contour}
	\end{figure}
	
	In most cases, $\Gamma$ is typically chosen as Talbot's contour \cite{weideman2006optimizing}, parabolic contours \cite{weideman2019gauss,weideman2007parabolic}, or hyperbolic contours \cite{weideman2007parabolic,lopez2006spectral,sheen2003parallel}. In the following text, we denote it as
	$$
	\Gamma: z = z(\phi), \quad -\infty < \phi < \infty,
	$$
	indicating that the integration path is a specifically defined curve that can be transformed into a horizontal line, yielding
	\begin{equation} \label{eq2.4}
		u(t) = I := \int_{-\infty}^{\infty} g(\phi,t)\,d\phi,
	\end{equation}
	where
	\begin{equation} \label{eq.2.5}
		g(\phi,t)=\frac{1}{2\pi i} e^{z(\phi)t} \widehat{u}(z(\phi)) z'(\phi).
	\end{equation}
	The integral (\ref{eq2.4}) can then be efficiently approximated using either the trapezoidal rule or the midpoint rule  \cite{martensen1968numerischen,stenger2012numerical}.
	
	In this paper, we adopt the midpoint rules to approximate the integral (\ref{eq2.4}), i.e.,
	\begin{equation} \label{eq2.5}
		\begin{aligned}
			u(t) &\approx I_\tau := \tau \sum_{k=-\infty}^{+\infty} g(\phi_k,t)
			\approx I_{\tau,N} :=\tau \sum_{k=1-N}^{N-1} g(\phi_k,t),
		\end{aligned}
	\end{equation}
	where $\tau$ is a uniform time step size along the $\phi$-axis, $N$ denotes the number of quadrature nodes, and $\phi_k := (k + \frac{1}{2}) \tau, k = 0, \pm 1, \ldots, \pm (N-1)$.
	
	The scheme (\ref{eq2.5}) is referred to the temporal CIM scheme for approximating \(u(t)\), which attains spectral accuracy under appropriate conditions. As shown in \cite{weideman2007parabolic}, the CIM imposes low regularity requirements on \(u(t)\), and it only necessitates the existence of the Laplace transform. In fact, the convergence behavior of (\ref{eq2.5}) is closely linked to the analyticity of \(\widehat{u}(z)\): the width of the region of analytic continuation  of \(\widehat{u}(z)\) around the contour \(\Gamma\) is positively correlated with the exponential convergence rate of the CIM error.
	
	The error of the CIM scheme (\ref{eq2.5}) can be decomposed into the discretization error $DE := \|I-I_\tau\|_{L^2(\Omega)}$ and truncation error $TE := \|I_\tau-I_{\tau;N}\|_{L^2(\Omega)}$,
	$$
	\|I-I_{\tau,N}\|_{L^2(\Omega)}\leq \|I-I_\tau \|_{L^2(\Omega)}+\|I_\tau-I_{\tau;N} \|_{L^2(\Omega)}=:DE+TE,
	$$
	where the definitions of $I$, $I_\tau$ and $I_{\tau;N}$ are given by (\ref{eq2.4}) and (\ref{eq2.5}), respectively. 
	In \cite{weideman2007parabolic}, the authors analyzed these two types of errors by considering the function \( g(\phi,t) \) in (\ref{eq2.4}) as complex-valued, i.e., $g(\nu+ic,t)$, where \( g(\phi,t) \) admits an analytic continuation w.r.t. $\phi=\nu+ic$, and the time $t$ can be regarded as a given parameter. They assume that
	$$
	\int_{-\infty}^{+\infty}\|g(\nu+ic,t)\|_{L^2(\Omega)} \,d\nu= M(c,t)<\infty,\quad c \in (-\tilde{c},\tilde{c}).
	$$
	which leads to the following estimate for the discretization error:
	\begin{equation}\label{DE}
		DE\leq \frac{M(\tilde{c},t)}{e^{2\pi \tilde{c}/\tau}-1},
	\end{equation}
	implying that $DE=O(e^{-2\pi \tilde{c}/\tau})$. Estimate (\ref{DE}) indicates that larger values of $\tilde{c}$, which represents the width of the region of analytic continuation of $\widehat{u}(z(\phi))$, result in smaller discretization errors and faster convergence.
	As for the truncation error $TE$, assuming that $g(\phi,t)$ decays rapidly as $\phi = \nu+ic$, with $\nu \to \pm \infty$, it can be approximated by the magnitude of the last retained term. That is, for fixed $\tau$ and as $N\to \infty$,
	\begin{equation}\label{TE}
		TE=O(|g(\tau N,t)|).
	\end{equation}
	
	\subsection{Temporal Semi-discrete CIM Scheme}\label{section-2.2}
	
	According to the key idea of the CIM, we begin by deriving the Laplace transform of \( u(x,t) \), hereafter abbreviated as \( u(t) \), from problem (\ref{eq1.1}). Since the Laplace transform of the Caputo fractional derivative (\ref{eq1.2}) with \( \beta \in (0, 1) \) is given by \cite{Podlubny:1999}
	$$
	\mathcal{L}\big\{{_0^C}\!D_t^{\beta} u(t)\big\} = z^{\beta} \widehat{u}(z) - z^{\beta-1} u(0),
	$$
	and the Laplace transform of the integral term is
	$$
	\mathcal{L}\Big\{\int_0^t \kappa_\alpha(t-s)Au(s)\,ds\Big\}=\frac {1}{z^{\alpha}}A\widehat{u}(z),
	$$
	then applying the Laplace transform to problem (\ref{eq1.1}) yields
	$$
	z^\beta \widehat{u}(z)-z^{\beta-1}u_0+A\widehat{u}(z)+\frac {1}{z^{\alpha}}A\widehat{u}(z)=\widehat{f}(z).
	$$
	It follows that
	\begin{equation} \label{eq2.9}
		\widehat{u}(z) = \big( m(z)I + A\big)^{-1} \Big( \frac{m(z)}{z} u_0 +\frac{m(z)}{z^\beta} \widehat{f}(z) \Big),
	\end{equation}
	where
	\begin{equation} \label{eq2.10}
		m(z) = \frac{z^{\alpha + \beta}}{z^\alpha + 1}.
	\end{equation}
	By appropriately choosing the range of values for \(z\), it is possible to ensure that \(( m(z)I + A)^{-1}\) remains free of singularities. A detailed discussion will be provided in Section \ref{section3}. We can then observe that the singularity of \( \widehat{u}(z) \) only occurs at \( z = 0 \), since \( z^{\alpha} = -1 \) does not hold when $z$ is within a single-valued branch.
	
	Secondly, an appropriate integral contour \(\Gamma\) must be chosen. Here, we adopt the left branch of a hyperbolic contour parameterized by \(\mu > 0\) and \(\theta > 0\), which is defined as
	\begin{equation} \label{eq2.11}
		\Gamma:	z(\phi) = \mu\big(1 + \sin(i\phi - \theta)\big), \quad \phi = \nu + ic \in K, \quad -\infty < \nu < \infty,
	\end{equation}
	where \(K\) denotes an open strip containing all admissible horizontal lines, given by
	$$
	K = \big\{ \phi = \nu + ic \in \mathbb{C} ~|~ \nu \in \mathbb{R},~|c| < \tilde{c} \big\},
	$$
	with $\tilde{c}$ to be determined.
	One reason for choosing a hyperbolic contour is that, as observed in \cite{weideman2007parabolic}, such contours generally require fewer discretization points \(N\) than other previously mentioned types to achieve the same level of accuracy. Another reason is that hyperbolas possess asymptotes, which are necessary for determining the optimal value of $\tilde{c}$. Further details on this matter will also be provided in Section \ref{section3}.
	
	To facilitate analysis and discussion, we express (\ref{eq2.11}) in hyperbolic form:
	\begin{equation} \label{eq2.12}
		\Big(\frac{a - \mu}{\sin(\theta + c)}\Big)^2 - \Big(\frac{b}{\cos(\theta + c)}\Big)^2 = \mu^2, \quad z = a + ib.
	\end{equation}
	Figure \ref{fig.1} provides a visual representation of the mapping between the horizontal region \( K \) in the \(\phi\)-plane and its corresponding region \( S \) in the \( z \)-plane. Actually, from (\ref{eq2.12}), we observe that when \( c = 0 \), the contour $\Gamma$ corresponds to the left branch of a hyperbola, shown in white on the right side of Figure  \ref{fig.1}. As \( c \) decreases from 0 (i.e., \( c < 0 \)), indicating a downward shift of the horizontal line in region \( K \), the corresponding hyperbola in region \( S \) expands and shifts rightward. At the minimum value \( c = -\theta \), its image in the \( z \)-plane becomes a vertical line. Conversely, when \( c > 0 \) and increases from 0, corresponding to an upward movement of the line in the \(\phi\)-plane, the hyperbola  in the $z$-plane gradually closes and shifts leftward, eventually coinciding with the negative real axis when \( c \) reaches \( \pi/2 - \theta \). 
	
	\begin{figure}[htbp]
		\centering
		\begin{minipage}[t]{0.45\textwidth}
			\centering
			\includegraphics[width=1\textwidth]{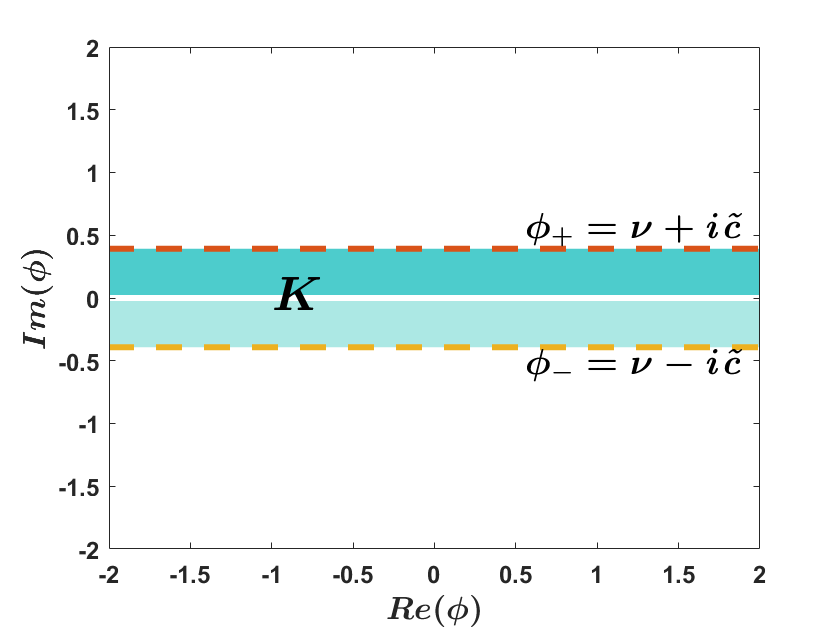}
		\end{minipage}
		\begin{minipage}[t]{0.45\textwidth}
			\centering
			\includegraphics[width=1\textwidth]{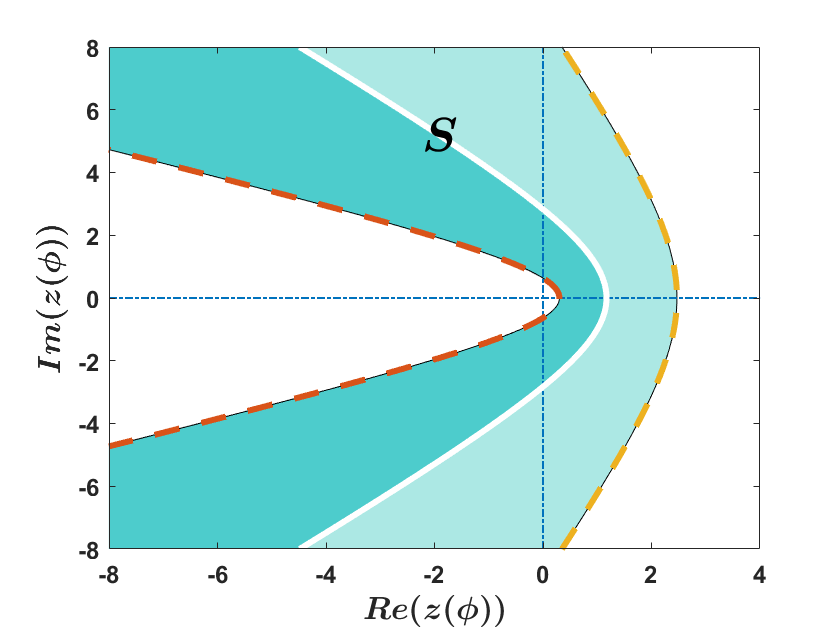}
		\end{minipage}
		\caption{A schematic mapping relationship between the horizontal region \( K \) in the \(\phi\)-plane and the region \( S \) in the \( z \)-plane.}
		\label{fig.1}
	\end{figure}
	
	It is essential that all singularities of the integrand (or \( \widehat{u}(z) \)) should lie to the left of the contour \( \Gamma \). As previously noted, for the problem we considered in this paper, the singularity is located only at the origin \( z = 0 \). Therefore, as shown in Figure \ref{contour}, the origin point must lie strictly to the left of \( \Gamma \).
	
	
	Finally, based on the CIM and leveraging the fact that the chosen contour \(\Gamma\) is symmetric with respect to the real axis (i.e., \(\widehat{u}(\overline{z}) = \overline{\widehat{u}(z)}\)), the temporal semi-discrete scheme for problem (\ref{eq1.1}) can be derived as:
	\begin{equation} \label{eq2.13}
		u^N(t) := I_{\tau;N} = \frac{\tau}{\pi} \operatorname{Im} \Big\{ \sum_{k=0}^{N-1} e^{z(\phi_k)t} \widehat{u}(z(\phi_k)) z'(\phi_k) \Big\},
	\end{equation}
	where \(\phi_k := (k + \frac{1}{2}) \tau\) for \(k = 0, 1, \ldots, N-1\). Here, \(\widehat{u}(z(\phi_k))\) and \(z(\phi_k)\) denote the values of (\ref{eq2.9}) and (\ref{eq2.11}) at \(\phi = \phi_k\), respectively. One can observe that compared to (\ref{eq2.5}), the scheme (\ref{eq2.13}) reduces the computational cost by half. For notational convenience, we denote \(z(\phi_k)\) as \(z_k\) in the subsequent analysis.
	
	\subsection{Spatial Semi-discrete Galerkin Scheme}
	
	We consider a family of quasi-uniform triangulations ${\{\mathcal{T}_h\}}_{0<h<1}$ of the domain $\bar{\Omega}$, with $\bar{h}$ denoting the maximum diameter among all finite elements, and construct the piecewise linear finite element function space $X_h\subset H_0^1(\Omega)$ over $\mathcal{T}_h$ consisting of continuous, piecewise linear functions:
	$$
	X_h := \big\{ \chi \in H_0^1(\Omega) : \chi|_{\tau} \text{ is a linear polynomial over } \tau \in \mathcal{T}_h \big\}.
	$$
	The semi-discrete Galerkin finite element approximation to problem (\ref{eq1.1}) is then formulated as follows: find $u_h(t)\in X_h$ such that for $0<t\leq T,$
	\begin{equation} \label{eq2.14}
		\left\{
		\begin{aligned}
			&\big({_0^C}\!D_{t}^{\beta}u_h(t),v_h\big) + \big(\nabla u_{h}(t), \nabla v_{h}\big)+\int_{0}^{t} \kappa_{\alpha}(t-s) \big(\nabla u_{h}(s), \nabla v_{h}\big)\,ds= (f,v_h)\quad \forall v_h\in X_h,\\
			&u_h(0)=u_{0,h}.
		\end{aligned}\right.
	\end{equation}
	Here $u_{0,h}:=P_hu_0$ denotes the $L^2$-projections of $u_{0}$ on $X_h$, where the $L^2$-projection operator $P_h$ is defined by
	$$
	P_h:L^2(\Omega)\to X_h,\,(P_hu,v_h)=(u,v_h)\quad \forall v_h\in X_h, u\in L^2(\Omega).
	$$
	
	For convenience, we rewrite the semidiscrete scheme (\ref{eq2.14}) in weak form as
	\begin{equation} \label{eq2.15}
		{_0^C}\!D_{t}^{\beta}u_h + A_hu_h+\int_{0}^{t} \kappa_{\alpha}(t-s) A_h u_h(s)\,ds= f_h \quad \forall t>0,
	\end{equation}
	where $f_h(t):=P_hf(t)$ and $A_h$ is the Ritz projection operator \cite{brenner2008mathematical} of $A$. After applying the Laplace transform to both sides of (\ref{eq2.15}), we have
	\begin{equation} \label{eq2.16}
		(z^\beta I+(1+z^{-\alpha})A_h)\widehat{u}_h(z)=z^{\beta-1}u_{0,h}+\widehat{f}_h(z).
	\end{equation}
	Consequently, the semi-discrete solution in space for problem (\ref{eq1.1}) can be expressed as
	\begin{equation} \label{eq2.17}
		u_h(t)=\frac{1}{2\pi i} \int_\Gamma e^{zt} (m(z)I + A_h)^{-1} \Big( \frac{m(z)}{z} u_{0,h} + \frac{m(z)}{z^\beta} \widehat{f}_h(z) \Big)dz,
	\end{equation}
	where $m(z)$ is given in (\ref{eq2.10}).
	
	\subsection{Fully Discrete Scheme and Acceleration of Algorithm}
	\label{subsection2.4}
	
	Applying the CIM to the spatial semi-discrete scheme (\ref{eq2.17}), we can obtain the fully discrete scheme for problem (\ref{eq1.1}) as
	\begin{equation} \label{eq2.18}
		u_h^N(t) := I_{\tau;h;N} = \frac{\tau}{\pi} \text{Im} \Big\{ \sum_{k=0}^{N-1} e^{z_k t}  \widehat{u}_h(z_k) z_k' \Big\},
	\end{equation}
	where $\widehat{u}_h(z_k)$ is obtained by solving (\ref{eq2.16}) at $z=z_k, k=0,1,2,...,N-1$.
	
	From (\ref{eq2.18}), it is evident that the solution at a given time \( t \) can be computed without relying on any information from previous time. This approach, referred to as a localization method, provides considerable computational advantages over nonlocal schemes.
	
	However, achieving optimal error convergence with the scheme (\ref{eq2.18}) typically requires $N$ to be between 100 and 200, which involves solving 100 to 200 elliptic equations (\ref{eq2.16}). The computational cost remains substantial as the spatial dimension increases. To further reduce the computational cost, one may employ the barycentric Lagrange interpolation \cite{berrut2004barycentric} to approximate all values of $\widehat{u}_h(z_k)$, leveraging the independence among these computations. Specifically, we first select $n+1$ Chebyshev interpolation nodes $z_{x_j}$ for $j=0,1,\ldots, n,$ along the truncated horizontal line. Then the values of $\widehat{u}_h(z_{x_j})$ can be computed by solving the associated elliptic equations  (\ref{eq2.16}). Finally, the values of $\widehat{u}_h(z_k)$ at the original equispaced nodes are approximated by using of the barycentric interpolation formula:
	$$
	\widehat{u}_h(z_k) \approx \widehat{u}_{I,h}^n(z_k) := \frac{\sum_{j=0}^{n} \frac{\omega_j}{z_k - z_{x_j}} \widehat{u}_h(z_{x_j})}{\sum_{j=0}^{n} \frac{\omega_j}{z_k - z_{x_j}}},
	$$
	where the barycentric weights $\omega_j$ are given by
	$$
	\omega_j := \frac{1}{\prod_{k \neq x_j} (z_{x_j} - z_k)}, \quad j = 0, 1, \ldots, n.
	$$
	In this manner, only $n+1$ elliptic equations need to be solved. Empirical results suggest that $n$ typically lies between $10$ and $20$, thereby enabling a further notable reduction in computational load. In view of the focus of this paper on localization and nonsmoothness, acceleration algorithms will not be further discussed in subsequent sections. For further details on the implementation and error analysis of this accelerated algorithm, we refer the reader to \cite[Section 6]{ma2023analyses}.
	
	\section{Solution Theory and Error Analysis}
	\label{section3}
	
	This section outlines the solution theory for problem (\ref{eq1.1}) (cf. \cite{mahata2023nonsmooth}), provides a proof of the spectral accuracy achieved by the CIM, and presents error estimates for both homogeneous and nonhomogeneous cases.
	
	\subsection{Solution Theory}\label{subsec:3.1}
	
	To begin, for $\vartheta\in(0,\pi]$, we introduce the sector
	$$
	\Sigma_{\vartheta} := \{ z \in \mathbb{C} : |\arg(z)| < \vartheta \},
	$$
	and assume that $\widehat{f}(z)$ is analytic and bounded on $S\subseteq\Sigma_{\tilde{\vartheta}}$ (the domain $S$ is shown in Figure \ref{fig.1}) for some $\tilde{\vartheta}\in(\pi/2,\pi)$, the associated norm is defined as
	\begin{equation}\label{eq:normS}
		\|\widehat{f}(z)\|_{S} := \sup_{z \in S} \|\widehat{f}(z)\|_{L^2(\Omega)}, \quad S \subseteq \Sigma_{\tilde{\vartheta}}.
	\end{equation}
	Naturally, the hyperbolic contour \(\Gamma\) discussed in Section \ref{section2} must lie entirely within the sector $\Sigma_{\vartheta}$, i.e., if $\delta<\frac{\pi }{2}$ denotes the angle between the asymptote of $\Gamma$ and the real axis, then $\delta \geq \pi-\vartheta$.
	
	In \cite{mahata2023nonsmooth}, Mahata and Sinha provided some important regularity properties for the solution $u(t)$ as follows:
	
	\begin{lemma}[Solution Theory]
		Let $j\in\{0,1\}$, $m\in\mathbb{N}$, and denote $u^{(m)}(t) := \frac{\partial^m u(t)}{\partial t^m}, f^{(m)}(t) := \frac{\partial^m f(t)}{\partial t^m}$. The solution $u(t)$ of problem (\ref{eq1.1}) satisfies the following properties.
		
		\begin{enumerate}[label=\roman*)]
			\item (Homogeneous Problem)
			If $u_0\in L^2(\Omega)$ and $f=0$, then the weak solution $u(t)$ is unique, the following a priori estimates hold for positive time:
			$$
			\|A^j u^{(m)}(t)\|_{L^2(\Omega)} \leq C t^{-(m+j\beta)} \|u_0\|_{L^2(\Omega)}.
			$$
			Moreover, we have
			$$
			\|u^{(m)}(t)\|_{L^p(\Omega)} \leq C t^{-(m + \frac{(p-q)\beta}{2})} \|u_0\|_{L^q(\Omega)}, \quad 0 \leq p, q \leq 2.
			$$
			\item (Nonhomogeneous Problem)
			If $u_0=0$ and $f \in C^m([0, T]; L^2(\Omega))$, the weak solution $u(t)$ is unique, the following prior estimates hold for positive time:
			$$
			\begin{aligned}
				&\|A u^{(m)}(t) \|_{L^2(\Omega)}\\
				\leq &C \sum_{j=0}^{m} t^{-j} \| f^{(m-j)}(0) \|_{L^2(\Omega)} +
				C \int_{0}^{t} (t-s)^{\beta-1} \| f^{(m)}(s) \|_{L^2(\Omega)}\,ds + \int_{0}^{t} \| f^{(m+1)}(s) \|_{L^2(\Omega)}\,ds.
			\end{aligned}
			$$
		\end{enumerate}
		\label{solution theory}
	\end{lemma}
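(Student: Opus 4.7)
The plan is to base both parts on the Laplace-representation of the solution established in Section 2, and to reduce all regularity estimates to resolvent bounds on the operator $(m(z)I + A)^{-1}$ combined with a careful choice of integration contour. Writing the homogeneous solution as
\begin{equation*}
u(t) \;=\; \frac{1}{2\pi i}\int_{\Gamma}e^{zt}\bigl(m(z)I+A\bigr)^{-1}\frac{m(z)}{z}\,u_{0}\,dz,
\end{equation*}
and the nonhomogeneous solution similarly with kernel $\frac{m(z)}{z^{\beta}}\widehat{f}(z)u_{0}$ replaced, we can interchange differentiation and integration and reduce everything to estimating contour integrals of the form $\int_{\Gamma}|z|^{m-1}|m(z)|^{j}e^{\operatorname{Re}(z)t}|dz|$.

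For the homogeneous estimates I would first show, using the spectral decomposition of the self-adjoint positive operator $A$, the resolvent bounds $\|(m(z)I+A)^{-1}\|_{L^{2}\to L^{2}}\le C/|m(z)|$ and $\|A^{j}(m(z)I+A)^{-1}\|_{L^{2}\to L^{2}}\le C|m(z)|^{j-1}$ for $j\in\{0,1\}$, valid on a sector $\Sigma_{\vartheta}$ avoiding the zeros of $z^{\alpha}+1$; here the analysis of $m(z)=z^{\alpha+\beta}/(z^{\alpha}+1)$ in $\Sigma_{\vartheta}$ with $\vartheta<\pi$ is essential so that $|m(z)|\sim|z|^{\alpha+\beta}$ for $|z|\le 1$ and $|m(z)|\sim|z|^{\beta}$ for $|z|\ge 1$. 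Next I would deform $\Gamma$ to a scaled hyperbola with vertex parameter $\mu\sim 1/t$ so that $\operatorname{Re}(z)t$ is uniformly bounded above and decays exponentially in $|\phi|$ along the contour, and compute
\begin{equation*}
\|A^{j}u^{(m)}(t)\|_{L^{2}} \;\le\; C\,\|u_{0}\|_{L^{2}}\int_{\Gamma}|z|^{m-1}|m(z)|^{j}e^{\operatorname{Re}(z)t}|dz| \;\le\; C\,t^{-(m+j\beta)}\|u_{0}\|_{L^{2}},
\end{equation*}
where the exponent $j\beta$ arises from $|m(z)|^{j}\sim|z|^{j\beta}$ on the active part of the contour. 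The $L^{p}\to L^{q}$ bound then follows by interpolating the $j=0$ and $j=1$ estimates, invoking Sobolev embedding $D(A^{s})\hookrightarrow L^{p}$ with $s=(p-q)/4$, and reading off the time exponent.

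For the nonhomogeneous part I would start from the corresponding contour representation and exploit $u_{0}=0$ by writing $\widehat{f}(z)$ through repeated integration by parts of $f(s)=\sum_{j=0}^{m-1}\frac{s^{j}}{j!}f^{(j)}(0)+\text{remainder}$, so that the Laplace-domain expression decomposes into a finite sum of $z^{-k}f^{(k)}(0)$ terms plus $z^{-m}\widehat{f^{(m)}}(z)$, and then further into $z^{-m-1}\widehat{f^{(m+1)}}(z)$ after one more integration by parts. Taking $A$ inside the integral, applying $\|A(m(z)I+A)^{-1}\|\le C$, and estimating the three resulting contour integrals on a $t$-dependent hyperbola yields the $t^{-j}\|f^{(m-j)}(0)\|_{L^{2}}$ boundary terms, the weakly singular $(t-s)^{\beta-1}$ convolution with $f^{(m)}$, and the integral of $f^{(m+1)}$, respectively.

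The main obstacle is obtaining the sharp resolvent estimate $\|A^{j}(m(z)I+A)^{-1}\|\le C|m(z)|^{j-1}$ uniformly in a sector that is wide enough to contain $\Gamma$ while excluding the branch behavior of $m(z)$ near the roots of $z^{\alpha}+1=0$; this requires verifying that $m(z)$ stays in a sector $\Sigma_{\vartheta'}$ with $\vartheta'<\pi$ whenever $z\in\Sigma_{\vartheta}$ with an appropriate $\vartheta$, which in turn dictates the admissible asymptotic angle $\delta$ of the hyperbolic contour. The scaling argument that converts the contour integral into the correct power of $t^{-1}$ also has to be done carefully because $m(z)$ has two distinct asymptotic regimes in $|z|$, and the contour must traverse both; this is where matching the parametrization with $\mu\sim 1/t$ is crucial, and is the step I expect to require the most delicate bookkeeping.
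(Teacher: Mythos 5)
The paper does not actually prove this lemma: it is imported verbatim from Mahata and Sinha \cite{mahata2023nonsmooth}, so there is no in-paper proof to compare against. That said, your Laplace-transform/resolvent strategy is exactly the route taken in that reference (and in the related subdiffusion literature), so the proposal is sound in outline: represent $u$ by the contour integral with kernel $(m(z)I+A)^{-1}$, prove the sectorial resolvent bound via the argument of $m(z)$ (this is precisely what Lemma~\ref{lemma1} of the present paper does), deform to a $t$-scaled hyperbola, and for the nonhomogeneous part Taylor-expand $f$ at $t=0$ to peel off the $z^{-k}f^{(k)}(0)$ terms. Two points in your sketch deserve tightening. First, for $j=1$ you should not estimate $A(m(z)I+A)^{-1}$ by $|m(z)|^{0}$ naively but use the identity $A(m(z)I+A)^{-1}=I-m(z)(m(z)I+A)^{-1}$ together with $\int_{\Gamma}e^{zt}\,dz=0$; moreover $|m(z)|\sim|z|^{\beta}$ only in the large-$|z|$ regime (which is the active one as $t\to0^{+}$), while for $|z|\lesssim1$ one has $|m(z)|\sim|z|^{\alpha+\beta}$, giving a \emph{better} decay there, so the uniform bound $t^{-(m+\beta)}$ on $(0,T]$ survives --- you flag this bookkeeping yourself, correctly. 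Second, the estimate stated with ``$L^{p}$, $0\le p,q\le2$'' is really an estimate in the fractional Sobolev scale $\dot H^{p}=D(A^{p/2})$; it follows by interpolating the $j=0$ and $j=1$ cases to get $\|A^{s}u^{(m)}(t)\|\le Ct^{-(m+s\beta)}\|u_{0}\|$ for $s\in[0,1]$, not by a Sobolev embedding with $s=(p-q)/4$ as you suggest. With those repairs your argument matches the standard proof.
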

	
	From Lemma \ref{solution theory}, one can see that the regularity of the solution at the initial moment will influence its behavior throughout the temporal evolution. Subsequent numerical experiments confirm that our method maintains spectral accuracy even under these conditions.
	
	Additionally, it is known from \cite{kato1961fractional} that the resolvent of $A=-\Delta$ satisfies the following estimate for $\vartheta\in(0,\pi/2)$ :
	$$
	\|(zI + A)^{-1}\|_{L^2(\Omega)} \leq C|z|^{-1}, \quad z \in \Sigma_{\pi - \vartheta}.
	$$
	
	The bound of $m(z)$ (see (\ref{eq2.10})) is an important tool in error analysis. In Lemma 2.2 of \cite{mahata2023nonsmooth}, Mahata and Sinha established an estimate for $|m(z)|$ under the condition $\alpha+\beta \geq 1$. This paper extends their result to a broader range $\alpha + \beta \in (\varepsilon,2-\varepsilon)$ with any  $0<\varepsilon < 1$, leading to the following result.
	
	\begin{lemma}\label{lemma1}
		Let $m(z)$ be defined as in (\ref{eq2.10}) with $\alpha + \beta \in (\varepsilon,2-\varepsilon)$ for any fixed $0<\varepsilon < 1$, and $\tilde{\vartheta}:= \frac{(\alpha+\beta+\varepsilon)\pi}{2(\alpha + \beta)}$. Then for any $ z \in \Sigma_{\tilde{\vartheta}}$, there hold
		
		\begin{enumerate}[label=\alph*)]
			\item $m(z)\in \Sigma_\zeta$ with $\zeta = \frac{(\alpha+\beta+\varepsilon)\pi}{2}$,
			
			\item $|m(z)| \leq C |z|^{\alpha + \beta}$.
		\end{enumerate}  	
		
		Consequently, it leads to the following resolvent estimate:
		\begin{equation} \label{prior}
			\| (m(z)I + A)^{-1} \|_{L^2(\Omega)} \leq C |m(z)|^{-1}  \quad \forall z \in \Sigma_{\tilde{\vartheta}}.
		\end{equation}
	\end{lemma}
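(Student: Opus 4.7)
The plan is to analyze $m(z) = z^{\alpha+\beta}/(z^\alpha+1)$ by separately bounding its argument (for part a) and its modulus (for part b), and then to combine these with the standard sectorial resolvent estimate for $-\Delta$ recalled just before the lemma. Throughout, I would parametrize by $\phi := \arg(z) \in (-\tilde\vartheta, \tilde\vartheta)$ and $r := |z|$, so that $z^\alpha = r^\alpha e^{i\alpha\phi}$ and $z^{\alpha+\beta} = r^{\alpha+\beta}e^{i(\alpha+\beta)\phi}$.

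For part (a), the identity $\arg(m(z)) = (\alpha+\beta)\phi - \arg(z^\alpha + 1)$ reduces the problem to controlling $\arg(z^\alpha + 1)$. I would prove the elementary geometric fact that for any $w = re^{i\theta}$ with $|\theta| < \pi$ one has $\arg(1+w)$ and $\theta$ with the same sign and $|\arg(1+w)| \leq |\theta|$. This can be seen from $1+w = (1+r\cos\theta) + ir\sin\theta$ by direct examination of the two cases $\cos\theta \geq 0$ and $\cos\theta < 0$ (in the latter case, as $r$ grows, the argument passes through $\pi/2$ once but never exceeds $\theta$). Applied with $w = z^\alpha$, this yields $|\arg(z^\alpha+1)| \leq \alpha|\phi|$ with matching sign, so that $\arg(m(z))$ lies between $\beta\phi$ and $(\alpha+\beta)\phi$. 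Hence $|\arg(m(z))| \leq (\alpha+\beta)|\phi| < (\alpha+\beta)\tilde\vartheta = (\alpha+\beta+\varepsilon)\pi/2 = \zeta$, which is exactly the stated sector (the maximum in the definition of $\zeta$ collapses to its second argument since $\beta/(\alpha+\beta)\leq 1$).

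For part (b), I would expand $|z^\alpha + 1|^2 = 1 + 2r^\alpha\cos(\alpha\phi) + r^{2\alpha}$ and treat it as a quadratic in $s := r^\alpha > 0$. When $\alpha\tilde\vartheta \leq \pi/2$ we have $\cos(\alpha\phi) \geq 0$ and the quadratic is $\geq 1$. When $\alpha\tilde\vartheta > \pi/2$, minimizing over $s > 0$ gives the lower bound $\sin^2(\alpha\phi)$, which in turn is bounded below by $\sin^2(\alpha\tilde\vartheta) > 0$ because $\alpha\tilde\vartheta < \pi$ (using $\alpha \leq \alpha+\beta$ and $\alpha+\beta+\varepsilon < 2$). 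In both cases $|z^\alpha+1| \geq c(\alpha,\beta,\varepsilon) > 0$, so $|m(z)| = r^{\alpha+\beta}/|z^\alpha+1| \leq C|z|^{\alpha+\beta}$.

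Finally, for the resolvent estimate, part (a) places $m(z)$ in $\Sigma_\zeta$ with $\zeta < \pi$ (since $\alpha+\beta+\varepsilon < 2$), so there exists $\vartheta \in (0,\pi/2)$ with $\Sigma_\zeta \subset \Sigma_{\pi-\vartheta}$. The quoted bound $\|(wI+A)^{-1}\|_{L^2(\Omega)} \leq C|w|^{-1}$ for $w \in \Sigma_{\pi-\vartheta}$, applied at $w = m(z)$, then yields \eqref{prior}. The main technical obstacle is the sign-controlled estimate $|\arg(z^\alpha+1)| \leq \alpha|\arg(z)|$, since without the sign information the cancellation in $\arg(m(z))$ could be lost and the sector $\Sigma_\zeta$ would have to be enlarged; a close second is verifying that $\alpha\tilde\vartheta$ stays strictly below $\pi$ (possibly with a constant that degenerates as $\alpha+\beta \downarrow \varepsilon$, but remains uniform for any fixed $\varepsilon$), as this is what keeps the denominator $|z^\alpha+1|$ bounded away from zero.
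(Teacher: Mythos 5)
Your proof is correct, and it differs from the paper's in two respects worth noting. For part (a), the paper multiplies numerator and denominator by $\overline{z^{\alpha}}+1$ to write $m(z)=m_1(z)+m_2(z)$ with a real positive denominator, reads off $\arg(m_1)=\beta\psi$ and $\arg(m_2)=(\alpha+\beta)\psi$, and concludes membership in $\Sigma_\zeta$ (implicitly using that the sum of two vectors lies in the cone they span); you instead work with $\arg(m(z))=(\alpha+\beta)\phi-\arg(z^{\alpha}+1)$ and prove the elementary sign-and-magnitude bound $|\arg(1+w)|\leq|\arg w|$. Both routes land on the same conclusion that $\arg(m(z))$ sits between $\beta\phi$ and $(\alpha+\beta)\phi$, so the difference is cosmetic in outcome, though your argument makes the cancellation explicit rather than relying on the convex-cone property of the split. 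The more substantive difference is part (b): the paper simply cites Mahata--Sinha for the bound $|m(z)|\leq C|z|^{\alpha+\beta}$, whereas you give a self-contained proof by minimizing the quadratic $1+2r^{\alpha}\cos(\alpha\phi)+r^{2\alpha}$ in $s=r^{\alpha}$ and checking that $\alpha\tilde{\vartheta}<\pi$ keeps $\sin(\alpha\tilde{\vartheta})$ bounded away from zero; this is exactly the verification the cited reference performs for $\alpha+\beta\geq 1$, and your check that it survives on the extended range $\alpha+\beta\in(\varepsilon,2-\varepsilon)$ is precisely the point of the lemma's generalization, so your version is arguably more complete than the paper's. The concluding resolvent estimate is handled the same way in both.
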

	
	\begin{proof}
		Since the range of $\alpha+\beta$ only affects the domain of $m(z)$, it suffices to prove part $a)$ here; the conclusion of part $b)$ then follows directly from \cite{Mahata2021FiniteEM}.
		Let $z =re^{i\psi} \in \Sigma_{\tilde{\vartheta}}$. Then $|\psi|< \tilde{\vartheta} = \frac{(\alpha+\beta+\varepsilon)\pi}{2(\alpha + \beta)}$ and $r>0$. Now,
		$$
		m(z)=\frac{z^{\alpha+\beta}}{z^\alpha+1}=\frac{r^{2\alpha+\beta}e^{i\beta\psi}+r^{\alpha+\beta}e^{i(\alpha+\beta)\psi}}{1+2r^\alpha \cos(\alpha \psi)+r^{2\alpha}} := m_1(z)+m_2(z),
		$$
		where
		$$
		m_1(z) := \frac{r^{2\alpha+\beta}e^{i\beta\psi}}{1+2r^\alpha \cos(\alpha \psi)+r^{2\alpha}}, ~~m_2(z) := \frac{r^{\alpha+\beta}e^{i(\alpha+\beta)\psi}}{1+2r^\alpha \cos(\alpha \psi)+r^{2\alpha}}.
		$$
		
		For $m_1(z)$, the argument satisfies
		$$
		|\arg(m_1)| = \beta|\psi|<\frac{\beta}{\alpha + \beta} \cdot \frac{(\alpha+\beta+\varepsilon)\pi}{2}<\frac{(\alpha+\beta+\varepsilon)\pi}{2},
		$$
		and for $m_2(z)$,
		$$
		|\arg(m_2)| = (\alpha+\beta)|\psi|<\frac{(\alpha+\beta+\varepsilon)\pi}{2}.
		$$
		Hence, $m(z) \in \Sigma_\zeta$ with $\zeta=\frac{(\alpha+\beta+\varepsilon)\pi}{2}$.
	\end{proof}
	
	\begin{remark}
		The result in Lemma \ref{lemma1} implies that \(m(z)\) can not be negative real number. As a result, \((m(z)I + A)^{-1}\) is well-defined, and \(\widehat{u}(z)\) in \eqref{eq2.9} only has a singular point \(z = 0\).
	\end{remark}
	
	\begin{remark}
		Now, the determination of parameter \( \tilde{c} \), a question left open in Section 2, can also be addressed. As indicated by (\ref{eq2.12}), the slope of the asymptote of the hyperbolic contour \( \Gamma \) is given by $\frac{\sin(\theta + c)}{\cos(\theta + c)} = \cot(\theta + c),$
		that is, $
		\tan \delta = \cot(\theta + c),$
		where \( \delta \), as introduced earlier, denotes the angle between this asymptote and the real axis. It follows that
		$$
		\delta = \frac{\pi}{2} - \theta - c \geq \pi - \tilde{\vartheta}.
		$$
		That is,
		$$
		c\leq \tilde{\vartheta}-\frac{\pi}{2}- \theta=\frac{\varepsilon \pi}{2(\alpha + \beta)}-\theta.
		$$
		In fact, since $\alpha + \beta \in (\varepsilon,2-\varepsilon)$, then $\frac{\varepsilon \pi}{2(\alpha + \beta)}-\theta<\frac{\pi}{2}-\theta$.
		Thus, based on the discussions in Subsection \ref{section-2.2}, it follows
		$$
		\tilde{c}=\min\Big\{\theta,\frac{\varepsilon \pi}{2(\alpha + \beta)}-\theta\Big\}.
		$$
    \end{remark}
	
	We note that all subsequent analyses are performed within the sector $\Sigma_{\tilde{\vartheta}}$, where $\tilde{\vartheta}$ is given in Lemma \ref{lemma1}.
	
	\subsection{Error Analysis in Temporal Semi-discrete CIM Scheme}

	In this subsection, we analyze the error estimate for the temporal semi-discrete CIM scheme \eqref{eq2.13}. First, it is essential to derive the decay property of $g(\phi,t)$ in (\ref{eq.2.5}) as presented in the following theorem.
	
	\begin{theorem} \label{Terr1}
		Let $g(\phi,t)$ be defined as in (\ref{eq.2.5}), where \( z(\phi) \in S \subseteq \Sigma_{\tilde{\vartheta}} \), $\alpha + \beta \in (\varepsilon,2-\varepsilon)$ with $0<\varepsilon < 1$, and \( \phi = \nu+ic \in K \), $|c|<\tilde{c}$. For \( t > 0 \), if \( u_0 \in L^2(\Omega) \), and $\|\widehat{f}(z)\|_{S}<\infty$ defined in \eqref{eq:normS}, then there exists a constant \( C>0 \) such that
		\begin{equation} \label{eqthm1}
			\|g(\phi,t)\|_{L^2(\Omega)} \leq C \varphi(\theta,\tilde{c}) e^{\mu t} \big( \| u_0 \|_{L^2(\Omega)} + \| \widehat{f}(z) \|_{S} \big)e^{-\mu t \sin(\theta-\tilde{c})\cosh \nu},
		\end{equation}	
		where $\varphi(\theta,\tilde{c}):=\sqrt{\frac{1+\sin(\theta+\tilde{c})}{1-\sin(\theta+\tilde{c})}}.$
	\end{theorem}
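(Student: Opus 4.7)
The plan is to bound each factor of $g(\phi, t) = \frac{1}{2\pi i}e^{z(\phi)t}\widehat{u}(z(\phi))z'(\phi)$ separately and then multiply. First I apply Lemma \ref{lemma1}'s resolvent estimate (\ref{prior}) to the representation (\ref{eq2.9}). Since by construction $z(\phi) \in \Sigma_{\tilde{\vartheta}}$ for every $\phi = \nu + ic$ with $|c| < \tilde{c}$, the $|m(z)|$ factors cancel and I obtain
\begin{equation*}
\|\widehat{u}(z(\phi))\|_{L^2(\Omega)} \leq C\Bigl(\tfrac{1}{|z(\phi)|}\|u_0\|_{L^2(\Omega)} + \tfrac{1}{|z(\phi)|^\beta}\|\widehat{f}\|_{S}\Bigr).
\end{equation*}

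Next I reduce everything else to elementary functions of $(\nu, c)$. Writing $i\phi - \theta = -(\theta+c) + i\nu$ and expanding $\sin$ and $\cos$ of a complex argument yields the closed forms
\begin{equation*}
\operatorname{Re}(z(\phi)) = \mu\bigl(1 - \sin(\theta+c)\cosh\nu\bigr), \quad |z(\phi)| = \mu\bigl(\cosh\nu - \sin(\theta+c)\bigr), \quad |z'(\phi)|^2 = \mu^2\bigl(\cosh^2\nu - \sin^2(\theta+c)\bigr),
\end{equation*}
the identity for $|z|$ following from the algebraic simplification $(1-sy)^2 + (1-s^2)(y^2-1) = (y-s)^2$ with $s := \sin(\theta+c)$ and $y := \cosh\nu \geq 1 > s$. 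Consequently
\begin{equation*}
|e^{z(\phi)t}| = e^{\mu t}e^{-\mu t\sin(\theta+c)\cosh\nu} \leq e^{\mu t}e^{-\mu t\sin(\theta-\tilde{c})\cosh\nu},
\end{equation*}
where the last inequality uses $c \geq -\tilde{c}$ and the monotonicity of $\sin$ on $(0, \pi/2)$, and
\begin{equation*}
\frac{|z'(\phi)|}{|z(\phi)|} = \sqrt{\frac{\cosh\nu + \sin(\theta+c)}{\cosh\nu - \sin(\theta+c)}} \leq \sqrt{\frac{1+\sin(\theta+\tilde{c})}{1-\sin(\theta+\tilde{c})}} = \varphi(\theta, \tilde{c}),
\end{equation*}
the maximum being attained at $(\nu, c) = (0, \tilde{c})$.

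Combining the pieces, the $\|u_0\|$-contribution is bounded by $\tfrac{C}{2\pi}|e^{z(\phi)t}|\cdot\tfrac{|z'(\phi)|}{|z(\phi)|}\|u_0\|_{L^2(\Omega)} \leq C\varphi(\theta,\tilde{c})e^{\mu t}\|u_0\|_{L^2(\Omega)}e^{-\mu t\sin(\theta-\tilde{c})\cosh\nu}$, matching (\ref{eqthm1}) directly. The $\|\widehat{f}\|_S$-contribution involves $|z'|/|z|^\beta = (|z'|/|z|)\cdot|z|^{1-\beta}$, and \emph{here lies the main technical hurdle}: the extra factor $|z(\phi)|^{1-\beta} \leq \mu^{1-\beta}\cosh^{1-\beta}\nu$ grows polynomially in $\cosh\nu$, which must be reconciled with the claimed pure-exponential envelope $e^{-\mu t\sin(\theta-\tilde{c})\cosh\nu}$. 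The standard fix is to absorb this polynomial growth via the elementary inequality $x^{1-\beta}e^{-ax} \leq C(a,\beta)$ on $x \geq 1$, applied with $a = \tfrac{1}{2}\mu t\sin(\theta-\tilde{c})$ and $x = \cosh\nu$; this costs at most a constant (which may depend on $t$, consistent with the paper's convention that $C$ may depend on $T$) and leaves the stated decay factor intact. Summing the two contributions then yields (\ref{eqthm1}).
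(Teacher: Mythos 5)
Your proposal follows the same route as the paper's proof: estimate $\|\widehat u(z)\|_{L^2(\Omega)}\le C\bigl(|z|^{-1}\|u_0\|_{L^2(\Omega)}+|z|^{-\beta}\|\widehat f\|_S\bigr)$ from (\ref{eq2.9}) and the resolvent bound (\ref{prior}), compute $|e^{z(\phi)t}|$ and $|z'(\phi)|$ in closed form, and extract $\varphi(\theta,\tilde c)$ from the quotient $|z'(\phi)|/|z(\phi)|=|\cos(\theta+c-i\nu)|/|1-\sin(\theta+c-i\nu)|$; your identities for $\operatorname{Re}(z)$, $|z|$ and $|z'|$ are exactly the López-Fernández--Palencia inequality the paper cites, written out. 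The one substantive difference is the treatment of the source-term contribution, and there your version is the more careful one. The paper disposes of the leftover factor $|1-\sin(l'-i\nu)|^{1-\beta}$ by asserting it is at most $2^{1-\beta}$; this step is incorrect, because $|\sin w|\le 1$ fails for complex $w$: in fact $|1-\sin(l'-i\nu)|=\cosh\nu-\sin l'\to\infty$ as $|\nu|\to\infty$. You correctly isolate this as the real obstacle and absorb the resulting $(\cosh\nu)^{1-\beta}$ growth into the exponential decay.

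Two caveats on your fix. First, it does not quite ``leave the stated decay factor intact'': for $c$ near $-\tilde c$ the actual exponent $\mu t\sin(\theta+c)\cosh\nu$ has no slack over $\mu t\sin(\theta-\tilde c)\cosh\nu$, so after sacrificing half of the latter to kill $(\cosh\nu)^{1-\beta}$ you are left with $e^{-\frac12\mu t\sin(\theta-\tilde c)\cosh\nu}$. Indeed, for the $\widehat f$-part the bound (\ref{eqthm1}) with the full exponent and a constant uniform in $c\in(-\tilde c,\tilde c)$ cannot hold, so the statement itself needs this (harmless) weakening --- only the exponential type of the decay enters the proof of Theorem \ref{Terr2}. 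Second, the constant from $x^{1-\beta}e^{-ax}\le C(a,\beta)$ behaves like $a^{-(1-\beta)}$ for small $a$, hence depends on a lower bound for $\mu t\sin(\theta-\tilde c)$; with the parameter choice (\ref{thm2.2}) and $t\ge t_0$ this quantity is bounded below uniformly in $N$, so the dependence is admissible, but it should be recorded, as should the $\mu^{1-\beta}$ prefactor you pick up from $|z|^{1-\beta}\le(\mu\cosh\nu)^{1-\beta}$ (only polynomial in $N$, so it does not affect the spectral rate).
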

	
	\begin{proof}
		Firstly, we estimate $g(\phi,t)$ on the upper part of the domain $K$: $\phi_+=\nu+ic\in K$, $0<c<\tilde{c}$. From (\ref{eq.2.5}) and (\ref{eq2.11}), we obtain
		$$
		g(\nu+ic,t)=\frac{\mu}{2\pi}e^{\mu t\{1-\sin(\theta+c-i\nu)\}}\cos(\theta+c-i\nu) \widehat{u}\big(\mu\{1-\sin(\theta +c-i\nu)\}\big).
		$$
		Let $\varpi=\theta+c<\theta+\tilde{c}$. Then the following estimate holds:
		\begin{equation} \label{eq3.23}
			\|g(\nu+ic,t)\|_{L^2(\Omega)}\leq Ce^{\mu t(1-\sin \varpi\cosh \nu)}|\cos(\varpi-i\nu)| \cdot \big\|\widehat{u}\big(\mu\{1-\sin(\varpi-i\nu)\}\big)\big\|_{L^2(\Omega)}.
		\end{equation}
		The main task now is to estimate \( \widehat{u}(z) \). Using the expression of \( \widehat{u}(z) \) in (\ref{eq2.9}) and the prior estimate (\ref{prior}), it follows that for all $z\in \Sigma_{\tilde{\vartheta}}$ with $\alpha + \beta \in (\varepsilon,2-\varepsilon)$ and $0<\varepsilon < 1$,
		$$
		\begin{aligned}			
			\|\widehat{u}(z)\|_{L^2(\Omega)}
			&=\Big\|(m(z)I+A)^{-1}\Big(\frac{m(z)}{z}u_0+\frac{m(z)}{z^\beta}\widehat{f}(z)\Big)\Big\|_{L^2(\Omega)}\\
			&\leq C|z|^{-1}\|u_0\|_{L^2(\Omega)}+C|z|^{-\beta}\|\widehat{f}(z)\|_{L^2(\Omega)}.
		\end{aligned}
		$$
		Thus,
		\begin{equation} \label{eq3.25}
			\begin{aligned}
				&|\cos(\varpi-i\nu)| \cdot \big\|\widehat{u}\big(\mu\{1-\sin(\varpi-i\nu)\}\big)\big\|_{L^2(\Omega)}\\
				&\leq C \frac{|\cos(\varpi-i\nu)|}{|\mu(1-\sin(\varpi-i\nu))|}\|u_0\|_{L^2(\Omega)}
				+C\frac{|\cos(\varpi-i\nu)|}{|\mu(1-\sin(\varpi-i\nu))|^\beta}\|\widehat{f}(z)\|_{L^2(\Omega)}\\
				&\leq C \sqrt{\frac{1+\sin \varpi}{1-\sin \varpi}}\|u_0\|_{L^2(\Omega)}+C(|1-\sin(\varpi-i\nu)|^{1-\beta})
				\sqrt{\frac{1+\sin \varpi}{1-\sin \varpi}}\|\widehat{f}(z)\|_{L^2(\Omega)},
			\end{aligned}
		\end{equation}
		where in the second inequality of (\ref{eq3.25}), we employ the following result from \cite{LopezFernndez2004OnTN}:
		$$
		\Big|\frac{\cos(\varpi - i\nu)}{(1 - \sin(\varpi - i\nu))}\Big|^2 = \frac{\cosh \nu+\sin \varpi}{\cosh \nu-\sin \varpi}
		\leq \frac{1 + \sin \varpi}{1 - \sin \varpi} .
		$$
		Given that $0<\beta<1$, it follows that $|1-\sin(\varpi-i\nu)|^{1-\beta}\leq 2^{1-\beta}$. Then according to (\ref{eq3.23}) and (\ref{eq3.25}), there holds
		$$
		\|g(\nu+ic,t)\|_{L^2(\Omega)} \leq C \sqrt{\frac{1+\sin(\theta+\tilde{c})}{1-\sin(\theta+\tilde{c})}} e^{\mu t} \big( \| u_0 \|_{L^2(\Omega)} + \| \widehat{f}(z) \|_{S} \big)e^{-\mu t \sin(\theta-\tilde{c})\cosh \nu}.
		$$
		We next estimate $g(\phi,t)$ on the lower half of $K$: $\phi_-=\nu-ic\in K$, $-\tilde{c}<-c\leq 0$. Since $0<\theta-\tilde{c}<\theta+\tilde{c}<\frac{\pi}{2}$ and $\sin(\theta-\tilde{c})<\sin\theta<\sin(\theta+\tilde{c})$, an analogous argument yields the same estimate:
		$$
		\|g(\nu-ic,t)\|_{L^2(\Omega)} \leq C \sqrt{\frac{1+\sin(\theta+\tilde{c})}{1-\sin(\theta+\tilde{c})}} e^{\mu t} \big( \| u_0 \|_{L^2(\Omega)} + \| \widehat{f}(z) \|_{S} \big)e^{-\mu t \sin(\theta-\tilde{c})\cosh \nu}.
		$$
		In summary, for all $\phi=\nu+ic\in K$, the function $g(\phi,t)$ satisfies estimate (\ref{eqthm1}). This completes the proof.
	\end{proof}
	
	From Theorem \ref{Terr1}, it can be observed that $g(\nu+ic,t)$ exhibits biexponential decay w.r.t $\nu$, that is, it decays quite rapidly as $\phi = \nu+ic$, $\nu \to \pm \infty$. To further analyze the error of the temporal semi-discrete CIM scheme \eqref{eq2.13}, we need to introduce the following lemma.
	
	\begin{lemma}[cf. Lemma 1 in \cite{LopezFernndez2004OnTN}]
		\label{Tl1}
		Let $L(x)=1+|\ln(1-e^{-x})|,\,x>0$. There hold
		$$
		\int_{0}^{+\infty} e^{-\gamma \cosh x} \, dx \leq L(\gamma), \quad \gamma > 0,
		$$
		and
		$$
		\int_{l}^{+\infty} e^{-\gamma \cosh x} \, dx \leq \big(1 + L(\gamma)\big) e^{-\gamma \cosh l}, \quad l > 0,~ \gamma > 0.
		$$
	\end{lemma}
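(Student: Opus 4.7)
The plan is to establish both estimates via a split-integral argument that reduces each to a standard bound for the exponential integral $E_1(\gamma):=\int_\gamma^{+\infty}e^{-t}/t\,dt$. For the first inequality I would split at $x=\ln 2$: on $[0,\ln 2]$ use $\cosh x\geq 1$ to bound the contribution by $(\ln 2)e^{-\gamma}\leq\ln 2$; on $[\ln 2,+\infty)$ use the elementary bound $\cosh x\geq\tfrac{1}{2}e^x$ and substitute $t=\tfrac{\gamma}{2}e^x$ to recognize the tail as $E_1(\gamma)$. The key analytic input is then the sharp comparison
\[
E_1(\gamma)\;\leq\;-\ln(1-e^{-\gamma})\;=\;|\ln(1-e^{-\gamma})|,\qquad \gamma>0,
\]
which can be proved by letting $f(\gamma):=-\ln(1-e^{-\gamma})-E_1(\gamma)$, noting $f(+\infty)=0$, and checking $f'(\gamma)=e^{-\gamma}\bigl(1/\gamma-1/(1-e^{-\gamma})\bigr)\leq 0$ (the latter from $1-e^{-\gamma}<\gamma$). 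Adding the two contributions yields $\ln 2+|\ln(1-e^{-\gamma})|\leq L(\gamma)$, since $\ln 2<1$.

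For the second inequality I would split at $x=l+\ln 2$. The short piece is bounded by $(\ln 2)e^{-\gamma\cosh l}$ using the monotonicity of $\cosh$ on $[0,+\infty)$, while on the tail the same bound $\cosh x\geq\tfrac{1}{2}e^x$ and the substitution $t=\tfrac{\gamma}{2}e^x$ give $E_1(\gamma e^l)$. Since $e^l\geq\cosh l$ and $E_1$ is decreasing, the standard estimate $E_1(\gamma\cosh l)\leq e^{-\gamma\cosh l}\ln\bigl(1+1/(\gamma\cosh l)\bigr)\leq e^{-\gamma\cosh l}\ln(1+1/\gamma)$ (using $\cosh l\geq 1$) produces
\[
\int_l^{+\infty}e^{-\gamma\cosh x}\,dx\;\leq\;e^{-\gamma\cosh l}\bigl(\ln 2+\ln(1+1/\gamma)\bigr).
\]
It then remains to verify the elementary inequality $\ln 2+\ln(1+1/\gamma)\leq 1+L(\gamma)$, equivalent to $2(1+\gamma)(1-e^{-\gamma})/\gamma\leq e^2$, which follows from $(1+\gamma)(1-e^{-\gamma})/\gamma\leq 2$ (use $1-e^{-\gamma}\leq\min(\gamma,1)$).

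The main obstacle I anticipate is isolating the precise constant $L(\gamma)=1+|\ln(1-e^{-\gamma})|$ rather than a cruder expression such as $C(1+|\ln\gamma|)$. This hinges on the sharp comparison $E_1(\gamma)\leq|\ln(1-e^{-\gamma})|$ above, which is slightly stronger than the textbook bound $E_1(\gamma)\leq e^{-\gamma}\ln(1+1/\gamma)$ and is just tight enough to produce the stated $L(\gamma)$ in the first inequality. A subtler point in the second part is extracting the decay factor $e^{-\gamma\cosh l}$ cleanly: a naive reduction such as $\int_l^{+\infty}e^{-\gamma\cosh x}\,dx\leq\int_0^{+\infty}e^{-\gamma\cosh l\cdot\cosh y}\,dy\leq L(\gamma\cosh l)$ (based on $\cosh(l+y)\geq\cosh l\cdot\cosh y$) fails because $L(\gamma\cosh l)$ does not decay like $e^{-\gamma\cosh l}$, so one must instead split near $l$ and apply the $E_1$ estimate on the tail as above.
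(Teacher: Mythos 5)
The paper does not prove this lemma at all: it is imported by citation from L\'opez-Fern\'andez and Palencia (2004), so there is no in-paper argument to compare against. Your proof is correct and self-contained, and it follows the natural route one would take (and essentially the route of the cited source): split each integral at a point a fixed distance $\ln 2$ beyond the left endpoint, use $\cosh x\geq 1$ (resp.\ $\cosh x\geq\cosh l$) on the short piece and $\cosh x\geq\tfrac12 e^x$ on the tail to reduce to the exponential integral $E_1$. All the reductions check out: the substitution $t=\tfrac{\gamma}{2}e^x$ gives exactly $E_1(\gamma)$ and $E_1(\gamma e^l)$ at the respective lower limits; your monotone-comparison proof of $E_1(\gamma)\leq-\ln(1-e^{-\gamma})$ is valid ($f'\leq 0$ from $1-e^{-\gamma}<\gamma$, $f(+\infty)=0$); and the final elementary verifications ($\ln 2<1$ and $2(1+\gamma)(1-e^{-\gamma})/\gamma\leq 4<e^2$) are right. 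The only place you lean on an unproved ingredient is the ``standard estimate'' $E_1(y)\leq e^{-y}\ln(1+1/y)$; it would be worth one line to justify it, e.g.\ by the same monotonicity device, since $\tfrac{d}{dy}\bigl[e^{-y}\ln(1+1/y)-E_1(y)\bigr]=e^{-y}\bigl[\tfrac{1}{y+1}-\ln(1+1/y)\bigr]\leq 0$ and both sides vanish at $+\infty$. Your closing remark is also apt: the naive reduction via $\cosh(l+y)\geq\cosh l\cosh y$ would only yield $L(\gamma\cosh l)$, which lacks the exponential factor $e^{-\gamma\cosh l}$, so the split near $l$ is genuinely needed.
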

	
	As noted in \cite{weideman2007parabolic}, each new value of \( t \) leads to different parameters \( \theta \) and \( \tilde{c} \), implying that the transform \( \widehat{u}(z) \) must be sampled on a distinct contour. This could result in computational inefficiency in the matrix problem associated with the discretization of the spatial operator \( A \) in (\ref{eq2.16}), as well as in the observation of the long-term dynamic evolution of the simulated solution over time. To address this issue and select a unified optimal contour \( \Gamma \), a scaling parameter \( \Lambda > 1 \) is introduced. The subsequent analysis is performed over the interval \( t \in [t_0, t_1] \), where \( t_1 = \Lambda t_0 \), and a uniform error analysis of the temporal semi-discrete CIM scheme \eqref{eq2.13} is presented in the following theorem. 
	
	\begin{theorem} \label{Terr2}
		Let $u(t)$ and $u^N(t)$ be the solutions given by (\ref{eq2.3}) and (\ref{eq2.13}), respectively. Consider the time interval $t \in [t_0, t_1]$ with $t_1 = \Lambda t_0$ for some $\Lambda \geq 1$. Let $\Gamma$ be the contour defined in (\ref{eq2.11}) with parameters $\theta>0, \mu >0$, and $z\in \Sigma_{\tilde{\vartheta}}$ with $\alpha + \beta \in (\varepsilon,2-\varepsilon)$, $0<\varepsilon < 1$. Then, the following error estimate holds uniformly:
		$$
		\|u(t)-u^N(t)\|_{L^2(\Omega)}\leq C\varphi(\theta,\tilde{c})L\big(\mu t_0 \sin(\theta-\tilde{c})\big) \big( \| u_0 \|_{L^2(\Omega)} + \| \widehat{f}(z) \|_{S} \big) \frac{e^{\mu \Lambda t_0}}{e^{2\pi\tilde{c}/\tau}-1},
		$$
		where $\tilde{\vartheta}$ is given in Lemma \ref{lemma1}, and $\varphi(\theta,\tilde{c})$ is defined in Theorem \ref{Terr1}. The parameters $\mu$ and $\tau$ are chosen according to \cite{lopez2006spectral} as
		\begin{equation} \label{thm2.2}
			\mu=\frac{2\pi\tilde{c}N(1-\eta)}{\Lambda t_0 P(\eta)},\quad\tau=\frac{P(\eta)}{N}
		\end{equation}
		for any $0<\eta<1$, and
		$$
		P(\eta):=\rm{arcosh}\Big(\frac{\Lambda}{(1-\eta)\sin(\theta-\tilde{c})}\Big).
		$$
	\end{theorem}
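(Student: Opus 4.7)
The plan is to split the full error as
$$\|u(t)-u^N(t)\|_{L^2(\Omega)} \le DE + TE,$$
with $DE = \|I-I_\tau\|_{L^2(\Omega)}$ and $TE = \|I_\tau - I_{\tau;N}\|_{L^2(\Omega)}$ as introduced before Theorem \ref{Terr1}, bound each piece by combining the pointwise control of $g(\phi,t)$ provided by Theorem \ref{Terr1} with the integral estimates of Lemma \ref{Tl1}, and finally use the prescribed parameter choice (\ref{thm2.2}) to collapse both contributions into the single factor $e^{\mu\Lambda t_0}/(e^{2\pi\tilde c/\tau}-1)$ appearing in the theorem.

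For $DE$, I would start from the a priori bound (\ref{DE}),
$$DE \le \frac{M(\tilde c,t)}{e^{2\pi\tilde c/\tau}-1}, \qquad M(\tilde c,t) = \int_{-\infty}^{+\infty}\|g(\nu+i\tilde c,t)\|_{L^2(\Omega)}\,d\nu,$$
insert the estimate from Theorem \ref{Terr1} into the integrand, and apply the first inequality of Lemma \ref{Tl1} to obtain
$$\int_{-\infty}^{+\infty} e^{-\mu t\sin(\theta-\tilde c)\cosh\nu}\,d\nu \le 2L\bigl(\mu t\sin(\theta-\tilde c)\bigr).$$
Monotonicity in $t\in[t_0,\Lambda t_0]$ then lets me replace $e^{\mu t}$ by $e^{\mu\Lambda t_0}$, and (since $L$ is decreasing on $(0,\infty)$) replace $L(\mu t\sin(\theta-\tilde c))$ by $L(\mu t_0\sin(\theta-\tilde c))$. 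This already produces the right-hand side of the theorem for the $DE$ part.

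For $TE$, I would expand
$$I_\tau - I_{\tau;N} = \frac{\tau}{2\pi i}\sum_{|k|\ge N}e^{z(\phi_k)t}\widehat{u}(z(\phi_k))z'(\phi_k),$$
recognise each summand as $\tau\cdot g(\phi_k,t)$, bound it pointwise via Theorem \ref{Terr1} with $c=0$, and compare the tail midpoint sum with the corresponding tail integral. The second inequality of Lemma \ref{Tl1}, applied at $l = N\tau - \tau/2$, then yields
$$TE \le C\varphi(\theta,\tilde c)\bigl(\|u_0\|_{L^2(\Omega)}+\|\widehat{f}(z)\|_{S}\bigr)\bigl(1+L(\mu t\sin(\theta-\tilde c))\bigr)e^{\mu t - \mu t\sin(\theta-\tilde c)\cosh(N\tau)}.$$
The parameter choice (\ref{thm2.2}) gives $\cosh(N\tau) = \cosh P(\eta) = \Lambda/((1-\eta)\sin(\theta-\tilde c))$, so the exponent simplifies to $\mu t(1-\Lambda/(1-\eta))$. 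A direct computation shows that for $\Lambda\ge 1$ and $t\in[t_0,\Lambda t_0]$ this exponent is at least as negative as $\mu\Lambda t_0 - 2\pi\tilde c/\tau = -2\pi\tilde c N\eta/P(\eta)$, the effective $DE$ exponent, so $TE$ is absorbed into the $DE$ bound up to a constant.

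The main obstacle is this final verification: carefully tracking constants in the midpoint-to-integral comparison so that the $\tau$-factor does not spoil the estimate as $\tau\to 0$, merging the $1+L$ factor from $TE$ with the $L$ factor from $DE$, and confirming that under the prescribed $\mu,\tau$ the $TE$ decay is uniformly subordinate to the $DE$ decay on the whole window $[t_0,\Lambda t_0]$. The identity $\cosh P(\eta)=\Lambda/((1-\eta)\sin(\theta-\tilde c))$ is the structural ingredient from \cite{lopez2006spectral} that forces the balance, and together with the absorbing argument above it yields the spectral rate $e^{-2\pi\tilde c N\eta/P(\eta)}$ hidden inside the stated factor $e^{\mu\Lambda t_0}/(e^{2\pi\tilde c/\tau}-1)$.
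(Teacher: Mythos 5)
Your proposal follows essentially the same route as the paper's proof: the same $DE+TE$ decomposition, the bound on $g$ from Theorem \ref{Terr1} fed into estimate (\ref{DE}) and into the tail sum, both integrals controlled by Lemma \ref{Tl1}, and the parameter choice (\ref{thm2.2}) used to make the truncation exponent $\mu t\sin(\theta-\tilde c)\cosh(N\tau)\ge 2\pi\tilde c/\tau$ so that $TE$ is absorbed into the $DE$-type factor. The only cosmetic difference is that the paper derives $\mu$ and $\tau$ from the balance condition $e^{-\mu t_0\sin(\theta-\tilde c)\cosh(N\tau)}=e^{-2\pi\tilde c/\tau}$ rather than verifying the given formulas a posteriori; your argument is correct.
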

	
	\begin{proof}
		Applying the estimate of $\|g(\phi,t)\|_{L^2(\Omega)}$ given in Theorem \ref{Terr1}, we have the estimate of the truncation error $TE = \|I_\tau-I_{\tau;N}\|_{L^2(\Omega)}$ as follows
		$$
		\begin{aligned}
			TE&\le\tau \sum_{k=N}^{+\infty}\|g(\phi_k,t)\|_{L^2(\Omega)}\\
			&\leq \tau \sum_{k=N}^{\infty} \big(C\varphi(\theta,\tilde{c})\,e^{\mu t}(\|u_0\|_{L^2(\Omega)}+\|\widehat{f}(z)\|_{S})e^{-\mu t \sin(\theta-\tilde{c}) \cosh((k+\tfrac{1}{2})\tau)} \big)\\
			&\leq C\varphi(\theta,\tilde{c})\,e^{\mu t}\big(\|u_0\|_{L^2(\Omega)}+\|\widehat{f}(z)\|_{S}\big) \cdot \int_{N\tau}^{+\infty} e^{-\mu t\sin(\theta-\tilde{c})\cosh x} \, dx.
		\end{aligned}	
		$$
		By Lemma \ref{Tl1}, it follows
		$$
		\int_{N\tau}^{+\infty} e^{-\mu t\sin(\theta-\tilde{c})\cosh x} \, dx \leq \big(1 + L(\mu t \sin(\theta-\tilde{c}))\big) e^{-\mu t \sin(\theta-\tilde{c}) \cosh (N\tau)},
		$$
		which yields the estimate
		\begin{equation} \label{TE2}
			TE \leq C\varphi(\theta,\tilde{c}) L\big(\mu t \sin(\theta-\tilde{c})\big)  \big(\|u_0\|_{L^2(\Omega)}+\|\widehat{f}(z)\|_{S}\big) e^{\mu t} \cdot \frac{1}{e^{\mu t \sin(\theta-\tilde{c}) \cosh (N\tau)}}.
		\end{equation}
		
		As for the discretization error $DE = \|I-I_\tau\|_{L^2(\Omega)}$, it follows from the estimate (\ref{DE}) and Lemma \ref{Tl1} that
		\begin{equation} \label{DE1}
			\begin{aligned}
				DE &\leq \frac{1}{e^{2\pi \tilde{c}/\tau}-1}\int_{-\infty}^{+\infty}\|g(\nu+i\tilde{c},t)\|_{L^2(\Omega)}\,d \nu\\
				& \leq  C\varphi(\theta,\tilde{c})\,e^{\mu t}\big(\|u_0\|_{L^2(\Omega)}+\|\widehat{f}(z)\|_{S}\big) \Big(\int_{-\infty}^{+\infty} e^{-\mu t \sin(\theta-\tilde{c}) \cosh \nu}\,d\nu \Big)  \cdot \frac{1}{e^{2\pi \tilde{c}/\tau}-1}\\
				& = C\varphi(\theta,\tilde{c})\,e^{\mu t}\big(\|u_0\|_{L^2(\Omega)}+\|\widehat{f}(z)\|_{S}\big) \Big(2 \int_{0}^{+\infty} e^{-\mu t \sin(\theta-\tilde{c}) \cosh \nu}\,d\nu \Big)  \cdot \frac{1}{e^{2\pi \tilde{c}/\tau}-1}\\
				& \leq C\varphi(\theta,\tilde{c})\,L\big(\mu t \sin(\theta-\tilde{c})\big) \big(\|u_0\|_{L^2(\Omega)}+\|\widehat{f}(z)\|_{S}\big) e^{\mu t} \cdot \frac{1}{e^{2\pi \tilde{c}/\tau}-1}.
			\end{aligned}
		\end{equation}
		The estimates (\ref{TE2}) and (\ref{DE1}) indicate the asymptotic behavior
		\begin{equation} \label{TEDE1}
			TE = O(e^{-\mu t \sin(\theta-\tilde{c}) \cosh(\tau N)}), \quad DE = O(e^{-2\pi \tilde{c}/\tau}).
		\end{equation}
		It indicates in (\ref{TEDE1}) that the truncation error $TE$ decreases as time $t$ grows for $t \in [t_0,t_1]$, $t_1=\Lambda t_0$, and $\Lambda \geq 1$. Therefore, our strategy for selecting parameters is to equilibrate the truncation error at the initial time \( t = t_0 \) and the discretization error \( DE \). Specifically, we enforce the condition
		\begin{equation} \label{eq.balance}
			e^{-\mu t_0 \sin(\theta - \tilde{c}) \cosh(\tau N)} = e^{-2\pi \tilde{c}/\tau},
		\end{equation}
		while simultaneously minimizing \( DE \). Solving (\ref{eq.balance}) yields
		\begin{equation} \label{P1}
			\tau N = \rm{arcosh}\Big(\frac{2\pi\tilde{c}}{\mu \tau t_0 \sin(\theta-\tilde{c})} \Big).
		\end{equation}
		To determine the parameters $\mu$ and $\tau$, we introduce a free parameter $0<\eta<1$, and define the relation
		\begin{equation} \label{P2}
			\frac{2\pi\tilde{c}}{\mu \tau t_0} = \frac{\Lambda}{1-\eta}.
		\end{equation}
		Solving the system given by (\ref{P1}) and (\ref{P2}) yields the explicit expressions
		$$
		\tau = \frac{1}{N}{\rm{arcosh}}\Big(\frac{\Lambda}{(1-\eta) \sin(\theta-\tilde{c})} \Big) := \frac{P(\eta)}{N}, \quad \mu = \frac{2\pi\tilde{c} (1-\eta) N}{\Lambda t_0 P(\eta)}.
		$$
		
		Furthermore, by combining the estimates (\ref{TE2}) and (\ref{DE1}), we obtain the uniform error bound
		$$
		\begin{aligned}
			&\|u(t)-u^N(t)\|_{L^2(\Omega)}\leq DE + TE\\
			&\leq C\varphi(\theta,\tilde{c}) L\big(\mu t_0 \sin(\theta-\tilde{c})\big) \big( \| u_0 \|_{L^2(\Omega)} + \| \widehat{f}(z) \|_{S} \big) e^{\mu \Lambda t_0} \Big( \frac{1}{e^{2\pi \tilde{c}/\tau} - 1} + \frac{1}{e^{\mu t_0 \sin(\theta - \tilde{c}) \cosh(N \tau)}} \Big)\\
			&\leq C\varphi(\theta,\tilde{c}) L\big(\mu t_0 \sin(\theta-\tilde{c})\big) \big( \| u_0 \|_{L^2(\Omega)} + \| \widehat{f}(z) \|_{S} \big) \Big( \frac{e^{\mu \Lambda t_0}}{e^{2\pi \tilde{c}/\tau} - 1}\Big).
		\end{aligned}
		$$
		This completes the proof.
	\end{proof}
	
	In fact, as demonstrated in numerical results, with an appropriate parameter choice, the actual error of the temporal semi-discrete CIM scheme \eqref{eq2.13} exhibits the asymptotic behavior
	$$
	\|u(t)-u^N(t)\|_{L^2(\Omega)}=O(e^{\mu\Lambda t_0-2\pi\tilde{c}/\tau})=O(e^{-[2\pi\tilde{c}\eta/P(\eta)]N}).
	$$
	
	Several key observations can be made from this result:
	\begin{enumerate}
		\item
		Spectral Accuracy: Noting that $P(\eta) > \rm{arcosh}(\Lambda) = \ln(\Lambda + \sqrt{\Lambda^2-1}) \sim \ln(\Lambda)$, we confirm the method's spectral accuracy:
		\begin{equation} \label{accuracy}
			\|u(t) - u^N(t)\|_{L^2(\Omega)} = O(e^{-C N}) \quad \text{with} \quad C = O(1/\ln \Lambda).
		\end{equation}
		
		\item
		Analytic Bandwidth: The decay rate improves with a wider analytic bandwidth $\tilde{c}$ of the integrand $g(\phi,t)$ (or $\widehat{u}(z(\phi))$) in the $\phi$-plane. This bandwidth is determined solely by the regularity of the Laplace transform $\widehat{u}(z)$ in the $z$-plane, as $z(\phi)$ is a sufficiently smooth function. While the regularity of the solution $u(t)$ itself influences the magnitude of $\tilde{c}$ through the transformation, \textbf{our method guarantees an exponential convergence rate irrespective of the smoothness of $u(t)$}.
		
		\item
		Long-Time Simulation: (\ref{accuracy}) reveals a critical trade-off: a larger $\Lambda$ (i.e., a longer simulation time $t_1 = \Lambda t_0$) results in a smaller constant $C$, leading to a slower exponential decay. Consequently, achieving a prescribed accuracy for long-time simulations requires a significantly larger number of quadrature points $N$. This inherent characteristic of the CIM motivates the development of the acceleration algorithms discussed previously in Subsection \ref{subsection2.4}.
	\end{enumerate}
	
	\begin{remark}[Parameters Selection Procedure]
		For fixed values of $\theta$, $t_0$, $\Lambda$, and $N$, we can determine an optimal $\eta=\eta^*$ that minimizes the error estimate
		$$
		O(e^{-[2\pi\tilde{c}\eta/P(\eta)]N}):=O(e^{-Q(\eta)N})
		$$
		by maximizing the function $Q(\eta):=2\pi\tilde{c}\eta/P(\eta)$. Denote $Q(\eta*):=\max_{ 0<\eta<1}{Q(\eta)}$. Substituting $\eta^*$ into (\ref{thm2.2}) then yields the optimal parameters $\mu^*$ and $\tau^*$ for the contour $\Gamma$. The complete parameter selection procedure is presented as follows.
		
		\begin{algorithm}[H]\label{alg1}
			\SetAlgoLined
			\caption{Optimal parameters selection for $\Gamma$}
			\KwIn{$\theta$, $t_0$, $\Lambda \geq 1$, $N$.}
			\KwOut{The optimal parameters $\mu^*$ and $\tau^*$:}
			1. $P(\eta) \gets \rm{arcosh}\Big(\frac{\Lambda}{(1-\eta)\sin(\theta-\bar{c})}\Big)$\;
			2. $Q(\eta) \gets \frac{2\pi \bar{c} \eta}{P(\eta)}$\;
			3. $\eta^* \gets \max_{\eta \in (0,1)}Q(\eta)$\;	
			4. $\tau^*, \mu^* \gets$ Substitute $\eta^*$ into (\ref{thm2.2}) to get the optimal parameters $\tau^*$ and $\mu^*$.
		\end{algorithm}
	\end{remark}
	
	\subsection{Error Analysis of the Fully Discrete Scheme}
	
	The following two lemmas provide error estimates for the spatial semi-discrete scheme (\ref{eq2.17}), addressing the nonhomogeneous problem with zero initial data and the homogeneous problem with nonsmooth initial data, respectively. The proofs can be found in Theorems 3.1 and 3.2 of \cite{mahata2023nonsmooth}.
	
	\begin{lemma} \label{Serr1}
		Assume $f\equiv 0$ and $u_0 \in L^2(\Omega)$. Let $u(t)$ and $u_h(t)$ be the solutions given by (\ref{eq2.3}) and (\ref{eq2.17}), respectively. Then for $u_{0,h}=P_hu_0$, we have
		$$
		\|u(t) - u_h(t)\|_{L^2(\Omega)}+h\|u(t) - u_h(t)\|_{H_0^1(\Omega)} \leq Ch^2 t^{-\beta}\|u_0\|_{L^2(\Omega)}.
		$$
	\end{lemma}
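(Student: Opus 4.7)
Plan: I would combine the Laplace inversion with a parameter-dependent elliptic Galerkin error estimate. Since $f=0$, equations (\ref{eq2.3}) and (\ref{eq2.17}) give
\begin{equation*}
    u(t) - u_h(t) = \frac{1}{2\pi i}\int_\Gamma e^{zt}\,\frac{m(z)}{z}\,e(z)\,dz, \qquad e(z) := \bigl[(m(z)I+A)^{-1} - (m(z)I+A_h)^{-1}P_h\bigr]u_0,
\end{equation*}
reducing the task to estimating, for each $z\in\Gamma$, the Galerkin error of the parameter-dependent elliptic resolvent problem $m(z)w + Aw = u_0$, and then integrating against $e^{zt}m(z)/z$ along $\Gamma$.

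Next I would establish the uniform-in-$z$ bound
\begin{equation*}
    \|e(z)\|_{L^2(\Omega)} + h\|e(z)\|_{H_0^1(\Omega)} \leq Ch^2\|u_0\|_{L^2(\Omega)}, \qquad z\in \Sigma_{\tilde\vartheta},
\end{equation*}
with $C$ independent of $z$. Writing $Aw(z) = u_0 - m(z)w(z)$ and invoking the resolvent estimate (\ref{prior}) from Lemma \ref{lemma1} yields $\|w(z)\|_{H^2}\leq C\|u_0\|_{L^2}$ uniformly. A Cea-type argument for the sesquilinear form $a_z(u,v) = m(z)(u,v) + (\nabla u,\nabla v)$, valid because $m(z)$ lies in a sector of aperture strictly less than $\pi$ by part (a) of Lemma \ref{lemma1}, then gives the $H_0^1$ bound, and the $L^2$ bound follows by the Aubin-Nitsche duality trick applied to the adjoint problem with parameter $\overline{m(z)}$.

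Finally, I would insert this uniform bound into the contour integral. Splitting $\Gamma$ into the regions $|z|\leq 1$ and $|z|\geq 1$, and using
\begin{equation*}
    \frac{|m(z)|}{|z|} = \frac{|z|^{\alpha+\beta-1}}{|z^\alpha+1|} \leq C\begin{cases}|z|^{\alpha+\beta-1}, & |z|\leq 1,\\ |z|^{\beta-1}, & |z|\geq 1,\end{cases}
\end{equation*}
a Hankel-type rescaling $\rho = s/t$ on the large-$|z|$ portion extracts the factor $t^{-\beta}$, while the bounded small-$|z|$ portion contributes $O(1)\leq C t^{-\beta}$ on $(0,T]$. The main obstacle is obtaining the sharp exponent $t^{-\beta}$ rather than the naive $t^{-(\alpha+\beta)}$ that a single global bound on $|m(z)/z|$ would yield; this requires exploiting that the denominator $z^\alpha+1$ cancels one power of $|z|^\alpha$ as $|z|\to\infty$, so that the dominant ray of the integral sees only $|z|^{\beta-1}$ growth. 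A secondary technical point is verifying the uniform Cea estimate for the complex-parameter form, which is not coercive when $\Re m(z)<0$; the sectorial confinement of $m(z)$ from Lemma \ref{lemma1} is what makes this possible.
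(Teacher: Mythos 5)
The paper does not actually prove this lemma; it is quoted from Theorems 3.1--3.2 of the cited reference (Mahata and Sinha), and your proposal reconstructs essentially the same standard argument used there: represent $u-u_h$ as a contour integral of the resolvent difference, prove a $z$-uniform $O(h^2)$ Galerkin estimate for the parameter-dependent elliptic problem, and extract $t^{-\beta}$ by deforming to a $t$-dependent contour where $|m(z)/z|\sim |z|^{\beta-1}$ dominates. Your identification of the two key points --- the cancellation of $|z|^{\alpha}$ in $z^{\alpha}+1$ at infinity, and the sectorial confinement of $m(z)$ from Lemma \ref{lemma1} --- is exactly right.

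One step is thinner than you suggest: the uniform bound $\|e(z)\|_{L^2}+h\|e(z)\|_{H_0^1}\leq Ch^2\|u_0\|_{L^2}$ does not follow from a plain C\'ea estimate plus Aubin--Nitsche. The sectorial inf-sup inequality $|a_z(e,e)|\geq c\bigl(|m(z)|\|e\|^2+\|\nabla e\|^2\bigr)$ combined with Galerkin orthogonality yields $\|\nabla e\|\leq C\bigl(h+|m(z)|^{1/2}h^2\bigr)\|u_0\|$ and the duality step yields $\|e\|\leq C\bigl(|m(z)|h^2\|e\|+h\|\nabla e\|\bigr)$, so the extra $|m(z)|$-dependent terms must be absorbed by a case distinction: for $|m(z)|h^2$ small the duality argument closes, while for $|m(z)|h^2$ large one falls back on the trivial resolvent bounds $\|w\|+\|w_h\|\leq C|m(z)|^{-1}\|u_0\|\leq Ch^2\|u_0\|$ and $\|\nabla w\|+\|\nabla w_h\|\leq C|m(z)|^{-1/2}\|u_0\|\leq Ch\|u_0\|$. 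This is a known repair (it is how the uniform resolvent FEM estimate is proved in the literature your argument implicitly relies on), so it is a gloss rather than a fatal gap, but as written your C\'ea-plus-Nitsche sentence would not compile into a complete proof.
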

	
	\begin{lemma} \label{Serr2}
		Assume $f\neq 0$ and $u_0\equiv 0$. Let $u(t)$ and $u_h(t)$ be the solutions given by (\ref{eq2.3}) and (\ref{eq2.17}), respectively. Then
		$$
		\|u(t) - u_h(t)\|_{L^2(\Omega)} +h\|u(t) - u_h(t)\|_{H_0^1(\Omega)}\leq Ch^2 \Big( \|f(0)\| + \int_0^t \|f'(s)\| ds \Big).
		$$
	\end{lemma}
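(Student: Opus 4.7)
The plan is to work within the Laplace transform framework that produced~(\ref{eq2.17}) and reduce the spatial error to a pointwise Galerkin error estimate for a shifted resolvent, evaluated along the contour $\Gamma$. Since $u_0=0$, subtracting~(\ref{eq2.17}) from the continuous representation implied by~(\ref{eq2.3}) and~(\ref{eq2.9}) gives
\begin{equation*}
u(t)-u_h(t)=\frac{1}{2\pi i}\int_\Gamma e^{zt}\,\frac{m(z)}{z^\beta}\Bigl[(m(z)I+A)^{-1}-(m(z)I+A_h)^{-1}P_h\Bigr]\widehat{f}(z)\,dz,
\end{equation*}
reducing the task to (i) a uniform-in-$z$ elliptic error estimate for the shifted resolvent and (ii) a contour integral bound matching the desired right-hand side expressed in terms of $f(0)$ and $f'$.

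For step (i), fix $z\in\Sigma_{\tilde{\vartheta}}$ and $g\in L^2(\Omega)$, and let $w=(m(z)I+A)^{-1}g$ and $w_h=(m(z)I+A_h)^{-1}P_h g$, so that $w_h\in S_h$ is the Galerkin approximation of $w$ for the shifted bilinear form. Lemma~\ref{lemma1} yields $\|w\|_{L^2}\le C|m(z)|^{-1}\|g\|_{L^2}$; combining this with the identity $Aw=g-m(z)w$ produces the $z$-uniform regularity bound $\|w\|_{H^2}\le C\|g\|_{L^2}$. A standard Aubin--Nitsche duality argument for the shifted elliptic problem then furnishes
\begin{equation*}
\|w-w_h\|_{L^2(\Omega)}+h\,\|w-w_h\|_{H_0^1(\Omega)}\le Ch^2\|g\|_{L^2(\Omega)},
\end{equation*}
with the constant $C$ independent of $z\in\Sigma_{\tilde{\vartheta}}$.

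For step (ii), I exploit the key simplification $m(z)/z^\beta=z^\alpha/(z^\alpha+1)$, which is uniformly bounded on $\Sigma_{\tilde{\vartheta}}$. Writing $\widehat{f}(z)=z^{-1}f(0)+z^{-1}\widehat{f'}(z)$ (valid because $f\in C^1$ with no initial boundary contribution beyond $f(0)$) and applying Fubini recasts the representation as
\begin{equation*}
u(t)-u_h(t)=\mathcal{G}(t)\,f(0)+\int_0^t\mathcal{G}(t-s)\,f'(s)\,ds,
\end{equation*}
where $\mathcal{G}(t)$ is the inverse Laplace transform of $\tfrac{m(z)}{z^{\beta+1}}[(m(z)I+A)^{-1}-(m(z)I+A_h)^{-1}P_h]$. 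Combining the pointwise estimate from~(i), the inequality $|m(z)|/|z|^{\beta+1}\le C/|z|$, and the exponential decay of $e^{zt}$ along the hyperbolic contour produces $\|\mathcal{G}(t)\|_{L^2\to L^2}\le Ch^2$ uniformly for $t\in(0,T]$. Applying Minkowski's inequality to the convolution then delivers the $L^2$ bound; the $H_0^1$ bound follows identically from the second inequality in~(i).

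The main technical obstacle lies in step~(ii): a naive modulus estimate of $\int_\Gamma |e^{zt}|\,|dz|/|z|$ develops a logarithmic singularity as $t\to 0^+$, so simply taking absolute values inside the contour integral is not sharp enough to produce a $t$-independent constant. The resolution, following the strategy of~\cite{mahata2023nonsmooth}, is either to deform $\Gamma$ in a $t$-dependent manner onto a Hankel-type path (so that the integrand contracts along its rays) or to exploit cancellation by identifying the operator-valued symbol as the Laplace image of an integrable convolution kernel. Together with the $z$-uniformity of the Aubin--Nitsche constant for the shifted operator $m(z)I+A$---which hinges on the sectorial mapping property of $m$ established in Lemma~\ref{lemma1}---these two uniform bounds deliver the stated $L^2$ and $H_0^1$ error estimates.
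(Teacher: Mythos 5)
The paper does not prove this lemma itself; it defers to Theorems 3.1 and 3.2 of \cite{mahata2023nonsmooth}, and your argument is essentially a correct reconstruction of the standard proof used there: the resolvent error representation along $\Gamma$, the $z$-uniform second-order Galerkin estimate for the shifted resolvent $(m(z)I+A)^{-1}$ (which rests on the sectorial bound of Lemma \ref{lemma1} and elliptic regularity on the convex domain), the splitting $\widehat{f}(z)=z^{-1}f(0)+z^{-1}\widehat{f'}(z)$, and a $t$-scaled contour to get $\|\mathcal{G}(t)\|_{L^2\to L^2}\leq Ch^2$ uniformly in $t$. Your own flagging of the two delicate points — that the duality argument must be carried out for a non-Hermitian sesquilinear form with complex sectorial shift, and that the bound $\int_\Gamma|e^{zt}|\,|dz|/|z|\leq C$ requires deforming to a contour with $|z|\sim 1/t$ — correctly identifies where the work lies, and both are resolved exactly as you indicate.
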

	
	Now, by combining Theorem \ref{Terr2} and Lemma \ref{Serr1}, we can derive the error estimate for the fully discrete scheme in the case $u_0\in L^2(\Omega)$ and $f=0$.
	
	\begin{theorem} \label{thm3}
		Let $u(t)$ and $u_h^N(t)$ be the solutions given by (\ref{eq2.3}) and (\ref{eq2.18}), respectively. Consider the contour (\ref{eq2.11}), with $z\in \Sigma_{\tilde{\vartheta}}$, where $\alpha + \beta \in (\varepsilon,2-\varepsilon)$ for $0<\varepsilon < 1$, and let the parameters $\mu$ and $\tau$ be determined according to (\ref{thm2.2}). For $t\in[t_0,t_1]$, with $t_1=\Lambda t_0$ and $\Lambda \geq 1$, if $f\equiv 0$ and $u_0 \in L^2(\Omega)$, then the following error estimate holds:
		$$
		\|u(t)-u_h^N(t)\|_{L^2(\Omega)}
		\leq C \Big(h^2 t^{-\beta}+\varphi(\theta,\tilde{c})L\big(\mu t_0 \sin(\theta-\tilde{c})\big)\frac{e^{\mu \Lambda t_0}}{e^{2\pi\tilde{c}/\tau}-1} \Big)\|u_0\|_{L^2(\Omega)}.
		$$
	\end{theorem}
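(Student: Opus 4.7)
The plan is to split the full error into the spatial semi-discretization error and the temporal CIM error applied to the spatially discrete problem, via the triangle inequality
$$
\|u(t) - u_h^N(t)\|_{L^2(\Omega)} \leq \|u(t) - u_h(t)\|_{L^2(\Omega)} + \|u_h(t) - u_h^N(t)\|_{L^2(\Omega)}.
$$
The first summand is handled immediately by Lemma \ref{Serr1}, which under the hypotheses $f=0$ and $u_0\in L^2(\Omega)$ delivers the bound $Ch^2 t^{-\beta}\|u_0\|_{L^2(\Omega)}$. So no work is needed here beyond invoking the lemma.

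For the second summand I would re-run the proof of Theorem \ref{Terr2} with the continuous solution replaced by the spatially semi-discrete solution. The representation (\ref{eq2.17}) of $u_h(t)$ is the exact structural analog of (\ref{eq2.3}), with $A$ replaced by $A_h$, $u_0$ by $u_{0,h}=P_h u_0$, and the same factor $m(z)$ appearing in the resolvent. The key observation is that the resolvent bound $\|(m(z)I + A_h)^{-1}\|_{L^2(\Omega)} \leq C |m(z)|^{-1}$ holds uniformly in $h$ on $\Sigma_{\tilde\vartheta}$, because $A_h$ is self-adjoint and positive-definite on $S_h$ with spectrum on the positive real axis, so Lemma \ref{lemma1} applies verbatim with $A$ replaced by $A_h$. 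Consequently the $L^2$-estimate (\ref{eq3.25}) on $g(\phi,t)$, and from there the bounds (\ref{TE2}) and (\ref{DE1}) on truncation and discretization error, all remain valid for the semi-discrete integrand. The balancing argument for $\mu$ and $\tau$ is unchanged, yielding
$$
\|u_h(t)-u_h^N(t)\|_{L^2(\Omega)} \leq C\varphi(\theta,\tilde c)\,L\bigl(\mu t_0 \sin(\theta-\tilde c)\bigr)\,\|u_{0,h}\|_{L^2(\Omega)}\cdot\frac{e^{\mu\Lambda t_0}}{e^{2\pi\tilde c/\tau}-1}.
$$
Combined with the $L^2$-stability $\|u_{0,h}\|_{L^2(\Omega)} = \|P_h u_0\|_{L^2(\Omega)} \leq \|u_0\|_{L^2(\Omega)}$ of the $L^2$-projection and the spatial bound above, the triangle inequality gives precisely the stated estimate.

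The main obstacle is confirming that every piece of the Theorem \ref{Terr2} argument transfers cleanly to $A_h$ with $h$-independent constants; once the uniform resolvent estimate for $(m(z)I+A_h)^{-1}$ is in hand, everything else is a line-for-line repetition of the earlier proof. A minor technical point is noting that the source term contribution in (\ref{eq3.25}) drops out under the hypothesis $f=0$, so the a priori bound on $\widehat u_h(z)$ reduces to $\|\widehat u_h(z)\|_{L^2(\Omega)} \leq C|z|^{-1}\|u_{0,h}\|_{L^2(\Omega)}$, keeping the temporal estimate structurally identical to the continuous case.
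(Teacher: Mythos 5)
Your proposal is correct and follows exactly the route the paper intends: the paper gives no written proof, stating only that the result follows ``by combining Theorem \ref{Terr2} and Lemma \ref{Serr1},'' which is precisely your triangle-inequality splitting into the spatial error (Lemma \ref{Serr1}) and the temporal CIM error for the semi-discrete problem. You in fact supply the details the paper leaves implicit --- the $h$-uniform resolvent bound for $(m(z)I+A_h)^{-1}$ and the $L^2$-stability of $P_h$ --- and both are standard and correct.
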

	
	Similarly, the error estimate for the fully discrete scheme with $u_0=0$ can be obtained by combining Theorem \ref{Terr2} and Lemma \ref{Serr2}.
	
	\begin{theorem} \label{thm4}
		Let $u(t)$ and $u_h^N(t)$ be the  solutions given by (\ref{eq2.3}) and (\ref{eq2.18}), respectively. Consider the contour (\ref{eq2.11}), with $z\in \Sigma_{\tilde{\vartheta}}$, where $\alpha + \beta \in (\varepsilon,2-\varepsilon)$ for $0<\varepsilon < 1$, and let the parameters $\mu$ and $\tau$ be determined according to (\ref{thm2.2}). For $t\in[t_0,t_1]$, with $t_1=\Lambda t_0$ and $\Lambda \geq 1$, if $f\neq 0$ and $u_0\equiv 0$, then the following error estimate holds:
		$$
		\begin{aligned}
			&\|u(t)-u_h^N(t)\|_{L^2(\Omega)}\\
			&\leq C \Big(h^2 \Big( \|f(0)\| + \int_0^t \|f'(s)\|ds \Big)+\varphi(\theta,\tilde{c})L\big(\mu t_0 \sin(\theta-\tilde{c})\big)\frac{e^{\mu \Lambda t_0}}{e^{2\pi\tilde{c}/\tau}-1}\|\widehat{f}(z)\|_S\Big).
		\end{aligned}		
		$$
	\end{theorem}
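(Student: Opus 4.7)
The plan is to mirror the argument the authors use for Theorem~\ref{thm3}, splitting the total error by the triangle inequality into a purely spatial part and a purely temporal part evaluated at the spatial semi-discrete level,
$$
\|u(t)-u_h^N(t)\|_{L^2(\Omega)} \leq \|u(t)-u_h(t)\|_{L^2(\Omega)} + \|u_h(t)-u_h^N(t)\|_{L^2(\Omega)}.
$$
The first summand falls directly into the setting of Lemma~\ref{Serr2} (since $u_0=0$ and $f\neq 0$) and produces the $h^2\bigl(\|f(0)\|+\int_0^t\|f'(s)\|\,ds\bigr)$ contribution appearing in the statement, with no further work required.

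For the second summand, the key observation is that the contour representation (\ref{eq2.17}) of $u_h(t)$ has exactly the same structural form as (\ref{eq2.3}) for $u(t)$, with the operator $A$ replaced by the Ritz projection $A_h$ and the data $(u_0,\widehat f(z))$ replaced by $(u_{0,h},\widehat f_h(z))$; moreover $u_h^N(t)$ as defined in (\ref{eq2.18}) is precisely the midpoint-rule CIM quadrature of (\ref{eq2.17}) on the same hyperbolic contour (\ref{eq2.11}), with the same parameters $\mu,\tau$ prescribed by (\ref{thm2.2}). I would therefore replay the proof of Theorem~\ref{Terr2} verbatim at the semi-discrete-in-space level, under the current hypothesis $u_{0,h}=P_h\cdot 0 =0$: the pointwise decay estimate of Theorem~\ref{Terr1}, the truncation-error bound and the discretization-error bound of Theorem~\ref{Terr2} all carry over line by line with $A$ replaced by $A_h$, yielding
$$
\|u_h(t)-u_h^N(t)\|_{L^2(\Omega)} \leq C\,\varphi(\theta,\tilde c)\,L\bigl(\mu t_0\sin(\theta-\tilde c)\bigr)\,\|\widehat f_h(z)\|_S \cdot \frac{e^{\mu\Lambda t_0}}{e^{2\pi\tilde c/\tau}-1}.
$$
Finally, since $P_h$ is an $L^2$-projection and hence nonexpansive, $\|\widehat f_h(z)\|_S = \|P_h\widehat f(z)\|_S \leq \|\widehat f(z)\|_S$, so combining this with the spatial estimate gives the asserted bound.

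The main obstacle I anticipate is the uniform-in-$h$ transfer of Lemma~\ref{lemma1} to the discrete operator, namely the resolvent estimate $\|(m(z)I+A_h)^{-1}\|_{L^2(\Omega)} \leq C|m(z)|^{-1}$ on $\Sigma_{\tilde\vartheta}$ with a constant independent of $h$, which is what licenses the reuse of the $\widehat u$-bound in the proof of Theorem~\ref{Terr1}. This is however standard: $A_h$ is self-adjoint and positive definite on $S_h$ with the inherited $L^2$ inner product, so it is sectorial with the same angle of sectoriality as $A$, and the elementary argument of Lemma~\ref{lemma1} applies without change. Once this uniform bound is in place, the remainder of the proof of Theorem~\ref{Terr2} (the balancing of $DE$ and $TE$ and the optimal choice of $\tau$ and $\mu$) is purely algebraic and requires no adaptation, so the proof of Theorem~\ref{thm4} reduces essentially to reading Theorem~\ref{Terr2} with the data $u_0=0$, $\widehat f \leadsto \widehat f_h$ and adding the spatial bound supplied by Lemma~\ref{Serr2}.
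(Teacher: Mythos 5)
Your proposal is correct and follows essentially the same route as the paper, which obtains Theorem~\ref{thm4} precisely by combining the temporal CIM estimate of Theorem~\ref{Terr2} (applied at the spatially semi-discrete level, with $u_{0,h}=0$ and $\widehat f_h$ in place of $\widehat f$) with the spatial estimate of Lemma~\ref{Serr2} via the triangle inequality. Your explicit treatment of the uniform-in-$h$ resolvent bound for $A_h$ and the nonexpansiveness of $P_h$ supplies details the paper leaves implicit, but introduces no new idea beyond the paper's argument.
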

	
	\section{Numerical Results}
	\label{section4}
	
	In this section, we present four numerical examples to demonstrate the efficacy of the proposed spectral localization method for time-fractional integro-differential equations. The domain is set to \(\Omega = (0,1)\) or \((0,1) \times (0,1)\). For temporal discretization, we employ the CIM discussed in this paper. Specifically, for the hyperbolic contour (\ref{eq2.11}), we select the parameter \(\theta = 0.6767\) and set the initial time as \(t_0 = 0.1\). $T=\Lambda t_0$ for some given $\Lambda$. All the other parameters are determined in accordance with Algorithm \ref{alg1}. Spatial discretization is carried out using the standard piecewise linear Galerkin FEM.
	
	Given a fixed spatial mesh size $h$, the temporal errors are defined by
	$$
	\text{Err}_\tau(N):= \max_{t_0 \leq t \leq \Lambda t_0} \| u(\cdot,t) - u_h^N(\cdot,t) \|_{L^2(\Omega)},
	$$
	and for a fixed number of temporal nodes $N$, the spatial errors at time $t$ are measured by
	$$
	\text{Err}_h(t):= \| u(\cdot,t) - u_h^N(\cdot,t) \|_{L^2(\Omega)}.
	$$
	The spatial convergence order is computed using the formula:
	$$
	\text{Order} := \frac{\ln{\big(\mathrm{Err}_h(t)/\mathrm{Err}_{\frac{h}{2}}(t)\big)}}{\ln2}.
	$$
	All numerical examples were computed in MATLAB R2020a on a laptop with an Intel(R) Core(TM) i5-1035G1 CPU @ 1.00GHz.
	
	\subsection{Scalar Problem}
	
	\begin{example}
		Let $A=1$. We consider the scalar initial value problem
		\begin{equation} \label{ex1}
			\left\{
			\begin{aligned}
				&{_0^C}\!D_t^\beta u(t) + u(t) + \int_0^t \kappa_\alpha(t-s)u(s)\,ds = f(t), \quad t>0, \\
				&u(0) = 1,
			\end{aligned}\right.
		\end{equation}
		with the source term
		$$
		f(t) = 1 + \frac{3\sqrt{\pi}}{2}t + \frac{3\sqrt{\pi}}{2\Gamma(2-\beta)}t^{1-\beta} + \frac{1}{\Gamma(\alpha+1)}t^{\alpha} + \frac{3\sqrt{\pi}}{2\Gamma(\alpha+2)}t^{\alpha+1}.
		$$
		The exact solution is given by
		$$
		u(t) = 1 + \frac{3\sqrt{\pi}}{2}t.
		$$
	\end{example}
	
	To validate the numerical performance of the CIM, we conduct tests with different values of \(\alpha\), \(\beta\), and \(\Lambda\), as shown in Figure \ref{fig.2}.
	
	\begin{figure}[htbp]
		\centering
		\begin{subfigure}[c]{0.45\textwidth}
			\includegraphics[width=1\textwidth]{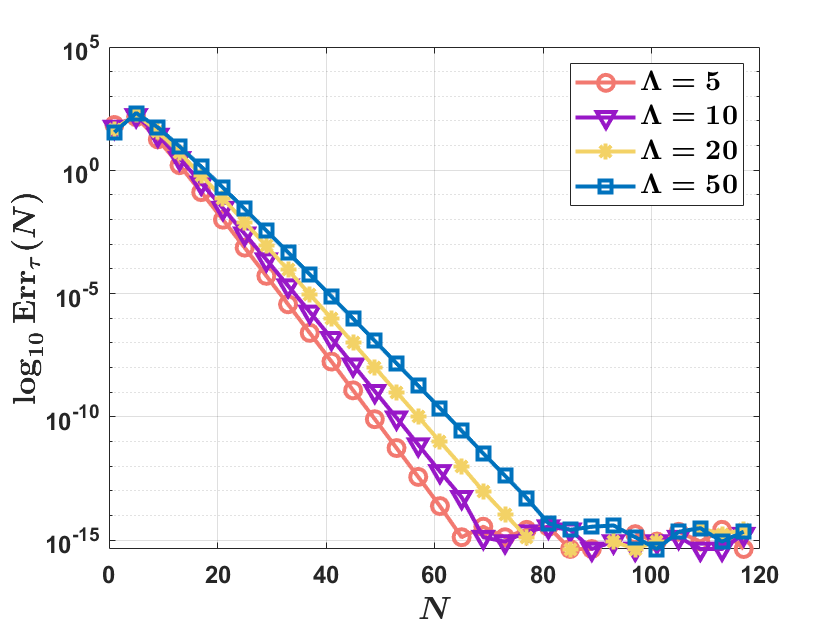}
			\caption{The absolute errors for different $\Lambda$ at $t=0.5$ with $\alpha=0.2$, $\beta=0.77$.}
			\label{fig:sub1}
		\end{subfigure}
		\hfill
		\begin{subfigure}[c]{0.45\textwidth}
			\includegraphics[width=1\textwidth]{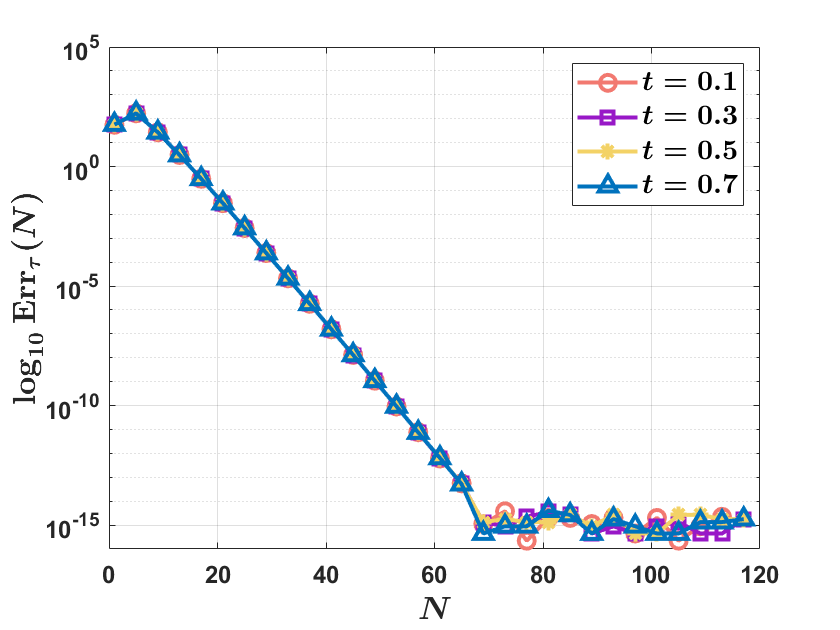}
			\caption{The absolute errors at different $t$ for $\Lambda=10$ with $\alpha=0.2$, $\beta=0.77$.}
			\label{fig:sub2}
		\end{subfigure}
		\hfill
		\begin{subfigure}[c]{0.45\textwidth}
			\includegraphics[width=1\textwidth]{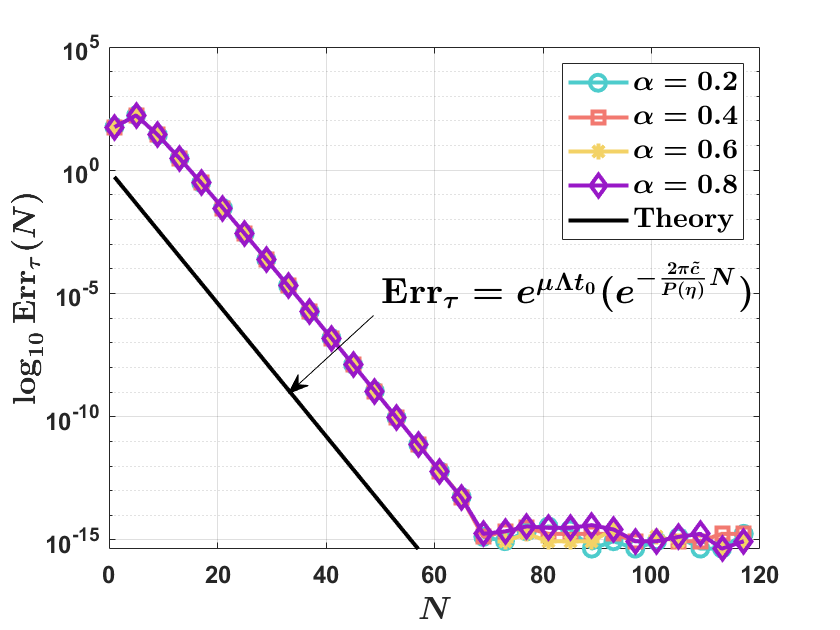}
			\caption{The absolute errors for different $\alpha$ at $t=0.5$ with $\beta=0.77$, $\Lambda=10$.}
			\label{fig:sub3}
		\end{subfigure}
		\hfill
		\begin{subfigure}[c]{0.45\textwidth}
			\includegraphics[width=1\textwidth]{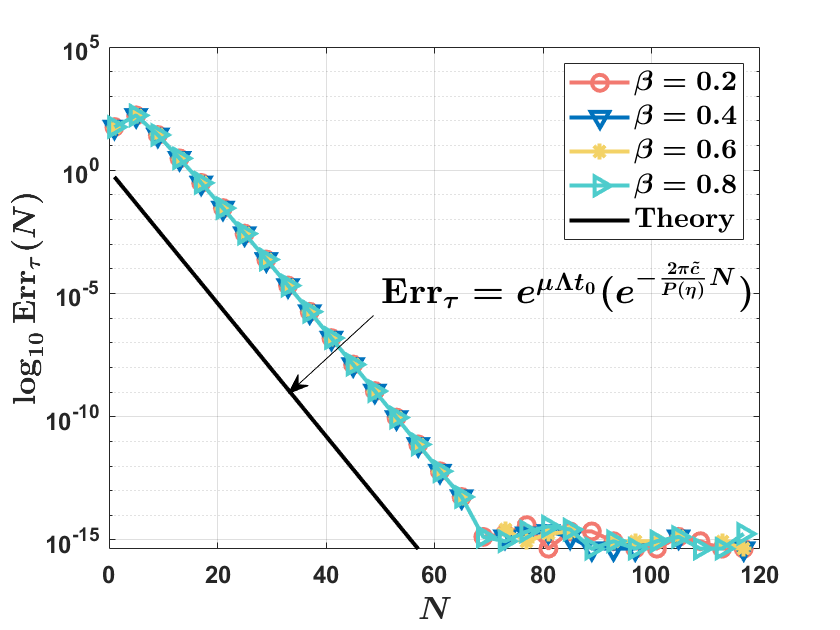}
			\caption{The absolute errors for different $\beta$ at $t=0.5$ with $\alpha=0.2$, $\Lambda=10$.}
			\label{fig:sub4}
		\end{subfigure}
		\caption{Numerical performances of the CIM for  problem (\ref{ex1}).}
		\label{fig.2}
	\end{figure}
	
	The results in Figure \ref{fig.2} demonstrate the excellent performance of the CIM in solving problem (\ref{ex1}), confirming its spectral accuracy. Here, \(N\) denotes the number of CIM quadrature nodes. Figure \ref{fig.2}(a) shows that for a fixed time \(t\), a larger value of \(\Lambda\) leads to a slower convergence rate of the absolute error. Conversely, from Figure \ref{fig.2}(b), one can observe that for a fixed \(\Lambda\), the absolute error remains uniform across different times \(t\). These observations regarding the error behavior and convergence rate are consistent with the theoretical analysis presented in Theorem \ref{Terr2}. Furthermore, Figures \ref{fig.2}(c) and \ref{fig.2}(d) indicate that the CIM exhibits stability and robustness for various values of \(\alpha\) and \(\beta\).
	
	\subsection{Homogeneous Problems with Nonsmooth Initial Values in 1-D and 2-D}
	
	\begin{example}
		Consider the homogeneous problem with nonsmooth initial values for problem (\ref{eq1.1}). In the 1-D case, the initial value is given by
		\begin{equation} \label{eq4.2}
			u_0(x) = \pi^3 \chi_{(0,2/3]}(x) \in L^2(\Omega),
		\end{equation}
		with source term $f(x,t)\equiv 0$. In the 2-D case, the initial value is taken as
		\begin{equation} \label{eq4.3}
			u_0(x,y)=\chi_{(1/2,1)\times(0,1)}(x,y)\in L^2(\Omega),
		\end{equation}
		and the source term is $f(x,y,t)\equiv 0$.
	\end{example}
	
	To evaluate the numerical performance, we use the numerical solution computed with \( N = 200 \) and \( h = 1/2^7 \) as the reference solution. The numerical results at \( t = 0.4 \) are presented in Tables \ref{tab.1}-\ref{tab.4} and Figures \ref{fig.ex2.1}. Additionally, Figures \ref{fig.ex2.2} and \ref{fig.ex2.3} illustrate the behaviors of reference solution at different times, which also confirm that the evolutionary behavior of the solution is influenced by the regularity of the initial condition, as discussed in Subsection \ref{subsec:3.1}. It also indirectly indicates that our algorithm is capable of accurately capturing this instantaneous evolutionary process near $t=0$. For a better observation of the solution's variation, the parameters are taken as $t_0 = 0.001$ and $\Lambda=1000$ in these two figures.
	
	\begin{table}[!ht]
		\centering
		\caption{Temporal errors $\mathrm{Err}_\tau(N)$ at $t = 0.4$ for the 1-D homogeneous problem with nonsmooth initial data \eqref{eq4.2}, shown for different values of $\alpha$ and $\beta$ with $h = 1/2^7$ and $\Lambda = 10$.}
		\renewcommand\arraystretch{1.5}
		\tabcolsep= 0.4 cm
		\begin{tabular}{c|cccccc}
			\Xhline{1.5pt}
			$N$ & ($\alpha,\beta$) & $\mathrm{Err}_\tau(N)$ & ($\alpha,\beta$) & $\mathrm{Err}_\tau(N)$ & ($\alpha,\beta$) & $\mathrm{Err}_\tau(N)$ \\ \hline
			20 & ~ & 6.8676e-04 & ~ & 6.4066e-04 & ~ & 4.8969e-04 \\
			40 & ~ & 6.4736e-08 & ~ & 5.2304e-08 & ~ & 2.8234e-08 \\
			60 & (0.4,0.25) & 5.0307e-12 & (0.5,0.5) & 3.6183e-12 & (0.6,0.75) & 1.5081e-12 \\
			80 & ~ & 1.1514e-13 & ~ & 6.1303e-14 & ~ & 1.0155e-13 \\
			100 & ~ & 1.7835e-13 & ~ & 1.7090e-13 & ~ & 1.4564e-13 \\ 		
			\Xhline{1.5pt}
		\end{tabular}
		\label{tab.1}
	\end{table}	
	
	\begin{table}[!ht]
		\centering
		\caption{Spatial errors $\mathrm{Err}_h(t)$ at $t = 0.4$ for the 1-D homogeneous problem with nonsmooth initial data \eqref{eq4.2}, computed for different values of $\alpha$ and $\beta$ with $N = 200$ and $\Lambda = 10$.}
		\renewcommand\arraystretch{1.5}
		\tabcolsep= 0.4 cm
		\begin{tabular}{c|cccccc}
			\Xhline{1.5pt}
			~ & ~ & ~ & ($\alpha,\beta$) & ~ & ~ & ~ \\
			\Xcline{2-7}{0.5pt}
			~ & (0.4,0.25) & ~ & (0.5,0.5) & ~ & (0.6,0.75) & ~ \\
			$h$ & $ \mathrm{Err}_h(t)$ & Order & $ \mathrm{Err}_h(t)$ & Order & $ \mathrm{Err}_h(t)$ & Order \\
			\Xhline{0.5pt}
			$1/2^5$ & 2.0007e-06 & -- & 4.2965e-07 & -- & 2.8437e-06 & -- \\
			$1/2^6$ & 5.0018e-07 & 2.0000 & 1.0742e-07 & 1.9999 & 7.1095e-07 & 1.9999 \\
			$1/2^7$ & 1.2513e-07 & 1.9990 & 2.6743e-08 & 2.0060 & 1.7765e-07 & 2.0007 \\
			$1/2^8$ & 3.1022e-08 & 2.0121 & 6.5801e-09 & 2.0230 & 4.4663e-08 & 1.9919 \\
			\Xhline{1.5pt}
		\end{tabular}
		\label{tab.2}
	\end{table}
	
	\begin{figure}[htbp]
		\centering
		\begin{minipage}[t]{0.45\textwidth}
			\centering
			\includegraphics[width=1\textwidth]{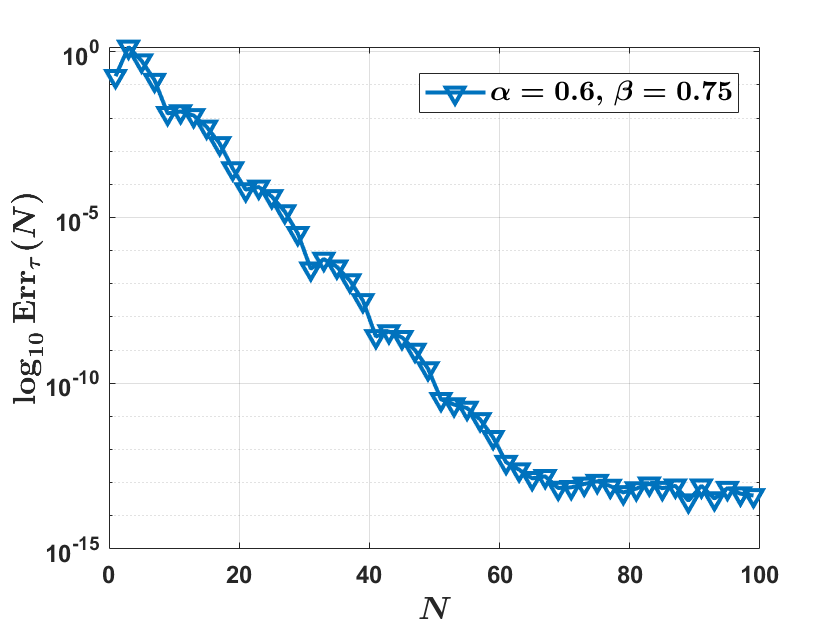}
			\caption{Numerical performance for the 1-D homogeneous problem with nonsmooth initial (\ref{eq4.2}) at $t=0.4$ with $h=1/2^7$, $\alpha=0.6$, $\beta=0.75$, and $\Lambda=10$.}
			\label{fig.ex2.1}
		\end{minipage}
		\hfill
		\begin{minipage}[t]{0.45\textwidth}
			\centering
			\includegraphics[width=1\textwidth]{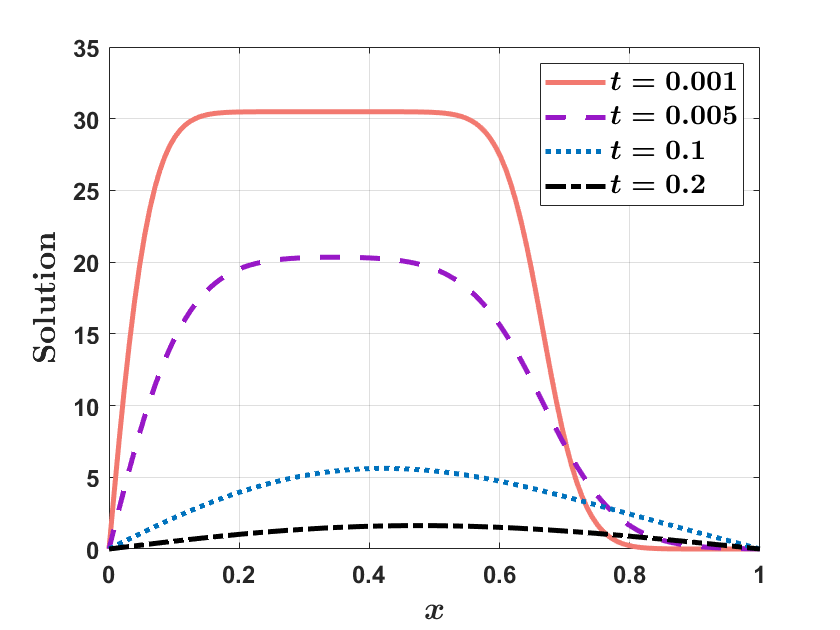}
			\caption{Reference solutions for the 1-D homogeneous problem with nonsmooth initial data (\ref{eq4.2}) at different times, with $h=1/2^7$, $\alpha=0.6$, $\beta=0.75$, and $\Lambda=1000$, $t_0=0.001$.}	
			\label{fig.ex2.2}	
		\end{minipage}
	\end{figure}
	
	\begin{table}[!ht]
		\centering
		\caption{Temporal errors $\mathrm{Err}_\tau(N)$ at $t = 0.4$ for the 2-D homogeneous problem with nonsmooth initial data \eqref{eq4.3}, shown for different values of $\alpha$ and $\beta$ with $h = 1/2^7$ and $\Lambda = 10$.}
		\renewcommand\arraystretch{1.5}
		\tabcolsep= 0.4 cm
		\begin{tabular}{c|cccccc}
			\Xhline{1.5pt}
			$N$ & ($\alpha,\beta$) & $\mathrm{Err}_\tau(N)$ & ($\alpha,\beta$) & $\mathrm{Err}_\tau(N)$ & ($\alpha,\beta$) & $\mathrm{Err}_\tau(N)$ \\ \hline
			40 & ~ & 3.5970e-02 & ~ & 3.5120e-02 & ~ & 2.9018e-02 \\
			60 & ~ & 2.2002e-04 & ~ & 1.9877e-04 & ~ & 2.1639e-05 \\
			80 & (0.4,0.25) & 1.2120e-06 & (0.5,0.5) & 1.3255e-06 & (0.6,0.75) & 2.1043e-06 \\
			100 & ~ & 3.4292e-09 & ~ & 3.2213e-09 & ~ & 1.2120e-09 \\
			120 & ~ & 5.5641e-13 & ~ & 3.4956e-13 & ~ & 7.3516e-13 \\
			\Xhline{1.5pt}
		\end{tabular}
		\label{tab.3}
	\end{table}
	
	\begin{table}[!ht]
		\centering
		\caption{Temporal errors $\mathrm{Err}_\tau(N)$ at $t = 0.1$ for the 2-D homogeneous problem with nonsmooth initial data \eqref{eq4.3}, shown for different values of $\alpha$ and $\beta$ with $h = 1/2^7$ and $\Lambda = 10$.}
		\renewcommand\arraystretch{1.5}
		\tabcolsep= 0.4 cm
		\begin{tabular}{c|cccccc}
			\Xhline{1.5pt}
			$N$ & ($\alpha,\beta$) & $\mathrm{Err}_\tau(N)$ & ($\alpha,\beta$) & $\mathrm{Err}_\tau(N)$ & ($\alpha,\beta$) & $\mathrm{Err}_\tau(N)$ \\ \hline
			40 & ~ & 3.4990E-02 & ~ & 3.4217-02 & ~ & 3.0117-02 \\
			60 & ~ & 2.1973e-04 & ~ & 1.9764e-04 & ~ & 2.0901e-05 \\
			80 & (0.4,0.25) & 1.1975e-06 & (0.5,0.5) & 1.3231e-06 & (0.6,0.75) & 2.0996e-06 \\
			100 & ~ & 3.3116e-09 & ~ & 3.0674e-09 & ~ & 9.9801e-10 \\
			120 & ~ & 5.3112e-13 & ~ & 3.1768e-13 & ~ & 6.9989e-13 \\
			\Xhline{1.5pt}
		\end{tabular}
		\label{tab.t=0.1}
	\end{table}
	
	\begin{table}[!ht]
		\centering
		\caption{Spatial errors $\mathrm{Err}_h(t)$ at $t = 0.4$ for the 2-D homogeneous problem with nonsmooth initial data \eqref{eq4.3}, computed for different values of $\alpha$ and $\beta$ with $N = 200$ and $\Lambda = 10$.}
		\renewcommand\arraystretch{1.5}
		\tabcolsep= 0.4 cm
		\begin{tabular}{c|cccccc}
			\Xhline{1.5pt}
			~ & ~ & ~ & ($\alpha,\beta$) & ~ & ~ & ~ \\
			\Xcline{2-7}{0.5pt}
			~ & (0.4,0.25) & ~ & (0.5,0.5) & ~ & (0.6,0.75) & ~ \\
			$h$ & $ \mathrm{Err}_h(t)$ & Order & $ \mathrm{Err}_h(t)$ & Order & $ \mathrm{Err}_h(t)$ & Order \\
			\Xhline{0.5pt}
			$1/2^5$ & 6.5608e-05 & -- & 5.4822e-05 & -- & 6.1102e-05 & -- \\
			$1/2^6$ & 1.6402e-05 & 2.0000 & 1.4641e-05 & 1.9047 & 1.5345e-05 & 1.9934 \\
			$1/2^7$ & 4.0344e-06 & 2.0235 & 3.5779e-06 & 2.0328 & 3.8398e-06 & 1.9987 \\
			$1/2^8$ & 9.9364e-07 & 2.0215 & 8.8333e-07 & 2.0181 & 9.5990e-07 & 2.0001 \\
			\Xhline{1.5pt}
		\end{tabular}
		\label{tab.4}
	\end{table}
	
	\begin{figure}[htbp]
		\centering
		\begin{subfigure}[c]{0.45\textwidth}
			\includegraphics[width=1\textwidth]{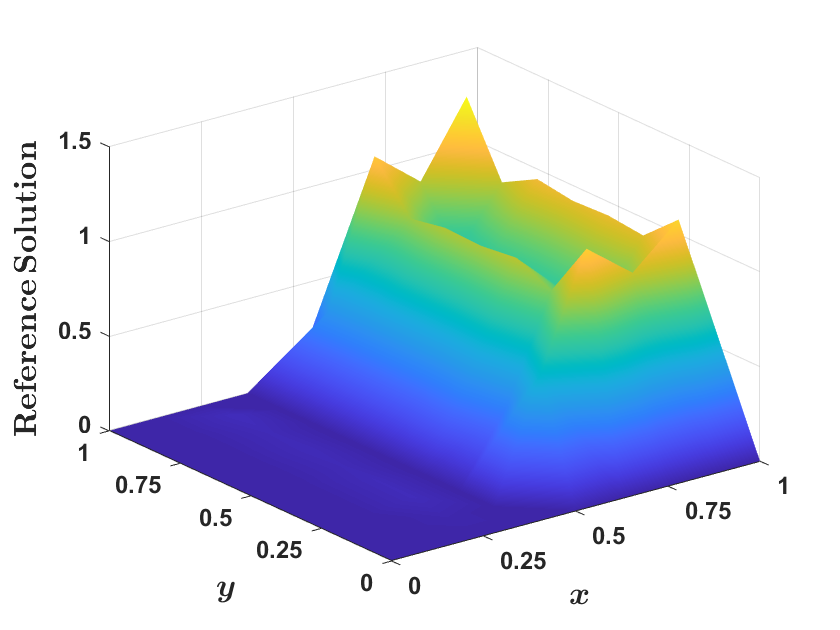}
			\caption{$t=0.001$.}
		\end{subfigure}
		\hfill
		\begin{subfigure}[c]{0.45\textwidth}
			\includegraphics[width=1\textwidth]{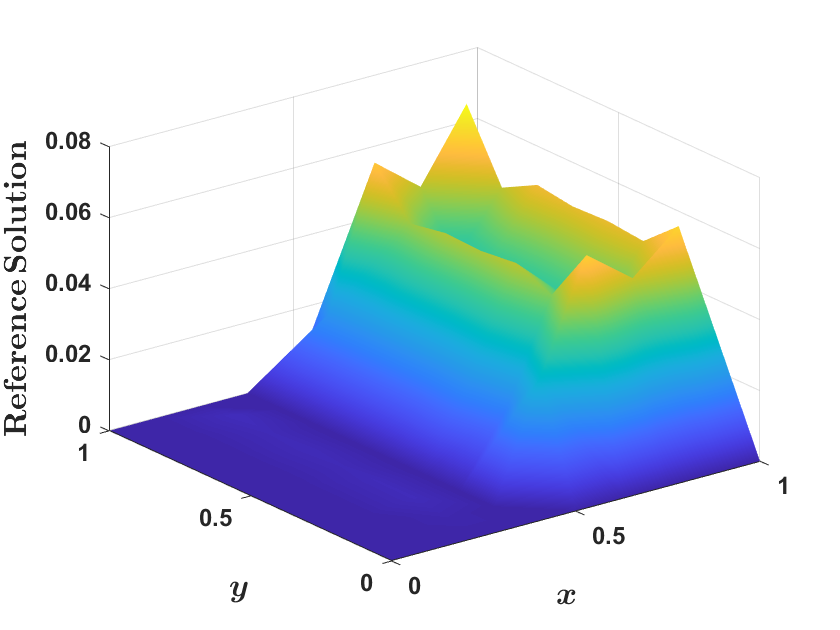}
			\caption{$t=0.02$.}
		\end{subfigure}
		\hfill
		\begin{subfigure}[c]{0.45\textwidth}
			\includegraphics[width=1\textwidth]{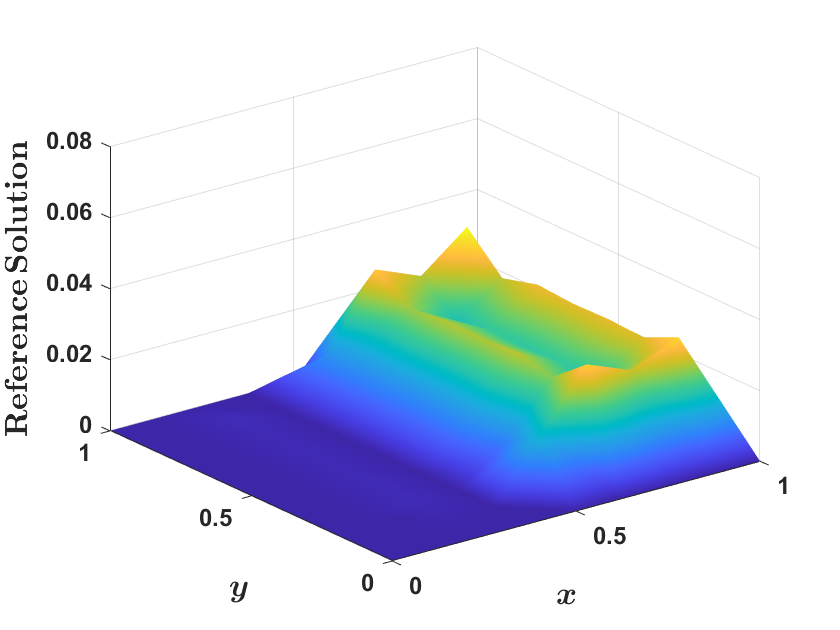}
			\caption{$t=0.04$.}
		\end{subfigure}
		\hfill
		\begin{subfigure}[c]{0.45\textwidth}
			\includegraphics[width=1\textwidth]{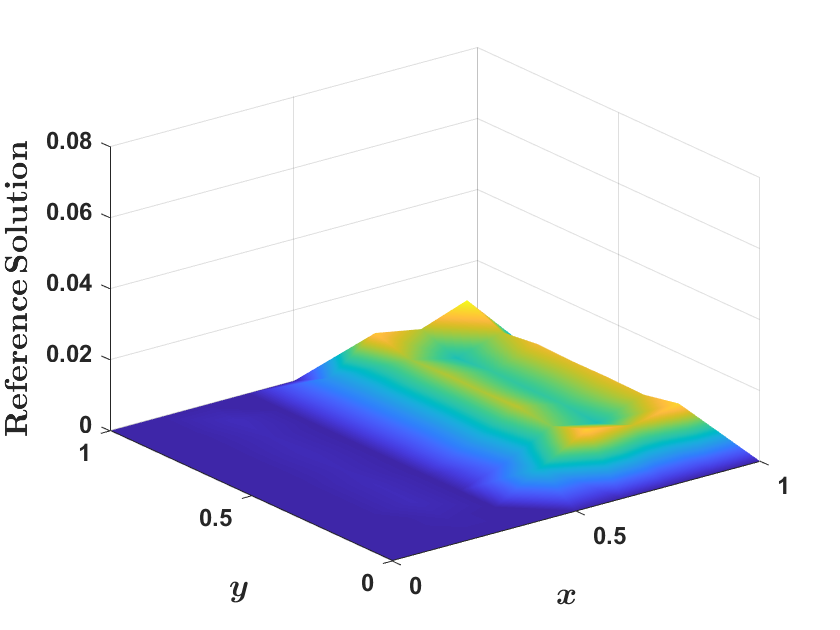}
			\caption{$t=0.1$.}
		\end{subfigure}
		\caption{Reference solutions for the 2-D homogeneous problem with  nonsmooth initial value (\ref{eq4.3}) at different times, with $\alpha=0.6$, $\beta=0.75$, and $\Lambda=1000$, $t_0=0.001$.}
		\label{fig.ex2.3}
	\end{figure}
	
	Tables \ref{tab.1} and \ref{tab.3}, along with Figure \ref{fig.ex2.1}, demonstrate that the CIM achieves spectral convergence and high accuracy in the temporal direction. Furthermore, Tables \ref{tab.2} and \ref{tab.4} confirm that the spatial convergence order of the proposed algorithm is \(O(h^2)\) for various values of \(\alpha\) and \(\beta\). These results are in full agreement with the theoretical analysis established in Theorem \ref{thm3}.
	
	Moreover, compared with other algorithms, such as those in \cite{mahata2023nonsmooth}, our method attains superior precision. In particular, for the 2-D case, our algorithm delivers enhanced accuracy at a comparable spatial computational cost. Additionally, as evidenced by the comparison between Tables \ref{tab.3} and \ref{tab.t=0.1}, the computational expense of our algorithm remains independent of the final time \(t\), since \(t\) is treated as a parameter. \textbf{\textit{This underscores a key advantage of our approach: it consistently maintains high accuracy even for problems with nonsmooth initial data.}}
	
	\subsection{Nonhomogeneous Problems with Vanishing Initial Values in 1-D and 2-D}
	
	\begin{example}
		In the 1-D case, we take the initial value $u_0(x)\equiv 0$, with the source term
		\begin{equation} \label{eq4.4}
			f(x,t)=\frac{1}{\Gamma(2-\beta)}x^3(1-x)t^{1-\beta}+(12x^2-6x)\left(t + \frac{1}{\Gamma(\alpha+2)}t^{\alpha+1}\right),
		\end{equation}
		thus the exact solution is
		$$
		u(x,t)=tx^3(1-x).
		$$
		In the 2-D case, we take $u_0(x,y)=0$ with the constant source term
		\begin{equation} \label{eq4.5}
			f(x,y,t)=1.
		\end{equation}
	\end{example}
	
	The reference solution is computed numerically using $N=200$, $h=1/2^9$. The corresponding results are presented in Tables \ref{tab.6}-\ref{tab.9} and Figure \ref{fig.7}.
	
	\begin{table}[!ht]
		\centering
		\caption{Temporal errors $\mathrm{Err}_\tau(N)$ at $t = 0.6$ for the 1-D nonhomogeneous problem with source term \eqref{eq4.4}, shown for different values of $\alpha$ and $\beta$ with $h = 1/2^{10}$ and $\Lambda = 10$.}
		\renewcommand\arraystretch{1.5}
		\tabcolsep= 0.4 cm
		\begin{tabular}{c|cccccc}
			\Xhline{1.5pt}
			$N$ & ($\alpha,\beta$) & $\mathrm{Err}_\tau(N)$ & ($\alpha,\beta$) & $\mathrm{Err}_\tau(N)$ & ($\alpha,\beta$) & $\mathrm{Err}_\tau(N)$ \\ \hline
			10 & ~ & 4.5381e-03 & ~ & 4.1416e-03 & ~ & 3.8911e-03 \\
			20 & ~ & 5.7582e-05 & ~ & 4.7460e-05 & ~ & 5.6301e-05 \\
			30 & (0.4,0.25) & 5.6110e-07 & (0.5,0.5) & 1.4720e-07 & (0.6,0.75) & 2.2743e-07 \\
			40 & ~ & 1.3154e-08 & ~ & 2.4810e-08 & ~ & 1.8756e-08 \\
			60 & ~ & 1.0070e-08 & ~ & 3.0381e-08 & ~ & 2.4069e-08 \\ 	
			\Xhline{1.5pt}
		\end{tabular}
		\label{tab.6}
	\end{table}
	
	\begin{table}[!ht]
		\centering
		\caption{Spatial errors $\mathrm{Err}_h(t)$ at $t = 0.6$ for the 1-D nonhomogeneous problem with source term \eqref{eq4.4}, computed for different values of $\alpha$ and $\beta$ with $N = 80$ and $\Lambda = 10$.}
		\renewcommand\arraystretch{1.5}
		\tabcolsep= 0.4 cm
		\begin{tabular}{c|cccccc}
			\Xhline{1.5pt}
			~ & ~ & ~ & ($\alpha,\beta$) & ~ & ~ & ~ \\
			\Xcline{2-7}{0.5pt}
			~ & (0.4,0.25) & ~ & (0.5,0.5) & ~ & (0.6,0.75) & ~ \\
			$h$ & $ \mathrm{Err}_h(t)$ & Order & $ \mathrm{Err}_h(t)$ & Order & $ \mathrm{Err}_h(t)$ & Order \\
			\Xhline{0.5pt}
			$1/2^{5}$ & 6.4321e-06 & -- & 7.9102e-06 & -- & 6.1210e-06 & -- \\
			$1/2^{6}$ & 1.6542e-06 & 1.9610 & 2.0583e-06 & 1.9457 & 1.5938e-06 & 1.9472 \\
			$1/2^{7}$ & 1.7421e-07 & 2.0015 & 2.2184e-07 & 2.0001 & 1.7240e-07 & 2.0003 \\
			$1/2^{8}$ & 1.9823e-08 & 2.0450 & 2.5089e-08 & 2.0673 & 1.9543e-08 & 2.0581 \\
			\Xhline{1.5pt}
		\end{tabular}
		\label{tab.7}
	\end{table}
	
	\begin{figure}[htbp]
		\centering
		\begin{subfigure}[c]{0.45\textwidth}
			\includegraphics[width=1\textwidth]{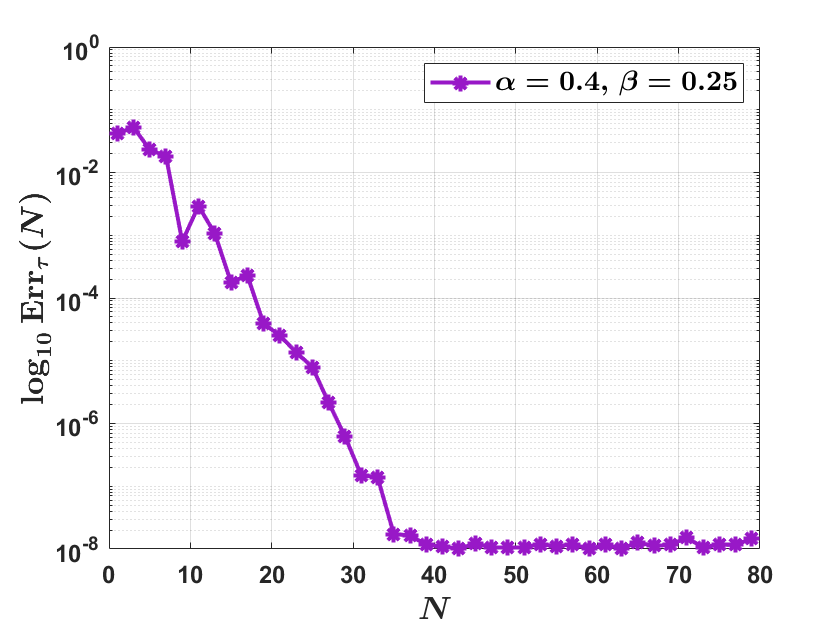}
			\caption{Absolute errors at $t = 0.6$ with $\alpha = 0.4$, $\beta = 0.25$, and $\Lambda = 10$.}
		\end{subfigure}
		\hfill
		\begin{subfigure}[c]{0.45\textwidth}
			\includegraphics[width=1\textwidth]{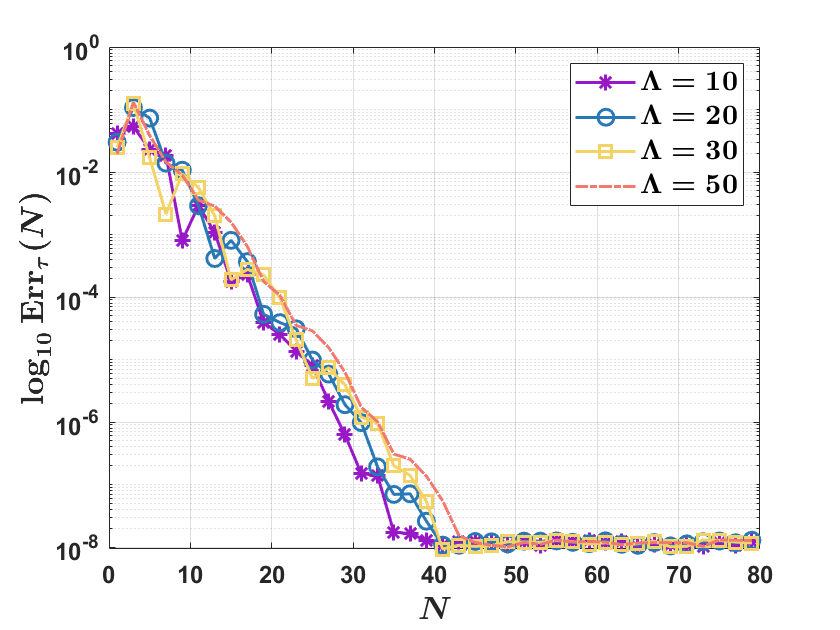}
			\caption{Absolute errors at $t = 0.6$ for different values of $\Lambda$, with $\alpha = 0.4$ and $\beta = 0.25$.}
		\end{subfigure}
		\caption{Numerical performances for the 1-D nonhomogeneous problem with source term \eqref{eq4.4}.}
		\label{fig.7}
	\end{figure}	
	
	\begin{table}[!ht]
		\centering
		\caption{Temporal errors $\mathrm{Err}_\tau(N)$ at $t = 0.6$ for the 2-D nonhomogeneous problem with source term \eqref{eq4.5}, shown for different values of $\alpha$ and $\beta$ with $h = 1/2^9$ and $\Lambda = 10$.}
		\renewcommand\arraystretch{1.5}
		\tabcolsep= 0.4 cm
		\begin{tabular}{c|cccccc}
			\Xhline{1.5pt}
			$N$ & ($\alpha,\beta$) & $\mathrm{Err}_\tau(N)$ & ($\alpha,\beta$) & $\mathrm{Err}_\tau(N)$ & ($\alpha,\beta$) & $\mathrm{Err}_\tau(N)$ \\ \hline
			40 & ~ & 1.4395e-02 & ~ & 1.5526e-02 & ~ & 1.7467e-02 \\
			60 & ~ & 3.6818e-04 & ~ & 4.0467e-04 & ~ & 4.6063e-05 \\
			80 & (0.4,0.25) & 6.5819e-06 & (0.5,0.5) & 5.5642e-06 & (0.6,0.75) & 4.0130e-06 \\
			100 & ~ & 2.5370e-08 & ~ & 2.1251e-08 & ~ & 1.4846e-08 \\
			120 & ~ & 8.9230e-11 & ~ & 7.4987e-11 & ~ & 5.2070e-11 \\ 		
			\Xhline{1.5pt}
		\end{tabular}
		\label{tab.8}
	\end{table}
	
	\begin{table}[!ht]
		\centering
		\caption{Spatial errors $\mathrm{Err}_h(t)$ at $t = 0.6$ for the 2-D nonhomogeneous problem with source term \eqref{eq4.5}, computed for different values of $\alpha$ and $\beta$ with $N = 200$ and $\Lambda = 10$.}
		\renewcommand\arraystretch{1.5}
		\tabcolsep= 0.4 cm
		\begin{tabular}{c|cccccc}
			\Xhline{1.5pt}
			~ & ~ & ~ & ($\alpha,\beta$) & ~ & ~ & ~ \\
			\Xcline{2-7}{0.5pt}
			~ & (0.4,0.25) & ~ & (0.5,0.5) & ~ & (0.6,0.75) & ~ \\
			$h$ & $ \mathrm{Err}_h(t)$ & Order & $ \mathrm{Err}_h(t)$ & Order & $ \mathrm{Err}_h(t)$ & Order \\
			\Xhline{0.5pt}
			$1/2^5$ & 1.8123e-06 & -- & 2.2370e-06 & -- & 3.4003e-06 & -- \\
			$1/2^6$ & 4.5307e-07 & 2.0000 & 5.5926e-07 & 1.9999 & 8.5018e-07 & 1.9998 \\
			$1/2^7$ & 1.1327e-07 & 2.0000 & 1.3982e-07 & 2.0000 & 2.1255e-07 & 1.9999 \\
			$1/2^8$ & 2.8310e-08 & 2.0003 & 3.4939e-08 & 2.0007 & 5.3104e-08 & 2.0009 \\
			\Xhline{1.5pt}
		\end{tabular}
		\label{tab.9}
	\end{table}
	
	Tables \ref{tab.6} and \ref{tab.8}, along with Figure \ref{fig.7}, demonstrate the spectral accuracy of the proposed scheme in the temporal direction. Furthermore, Tables \ref{tab.7} and \ref{tab.9} show that the spatial error converges at second order for various values of $\alpha$ and $\beta$. These results are consistent with the theoretical analysis presented in Theorem \ref{thm4}.
	
	\subsection{Nonhomogeneous Problem with Nonsmooth Solution in the 1-D Case}
	
	\begin{example}
		We consider a 1-D nonhomogeneous case of problem (\ref{eq1.1}) with a nonsmooth solution. The exact solution is chosen as
		\begin{equation}
			\label{exact}
			u(x,t) = 1 + t^{1/6}x^3(1-x),
		\end{equation}
		thus the source term is
		$$
		f(x,t) = 1+\frac{\Gamma(\frac{7}{6})}{\Gamma(\frac{7}{6}-\beta)}x^3(1-x)t^{\frac{1}{6}-\beta} + (12x^2-6x) \left(t^{\frac{1}{6}} + \frac{\Gamma(\frac{7}{6})}{\Gamma(\alpha+\frac{7}{6})} t^{\alpha+\frac{1}{6}}\right),
	    $$
		and the Laplace transform of $f(x,t)$ with respect to $t$ is
		$$
		\widehat{f}(x,z) = z^{-1}+ \Gamma\Big(\frac{7}{6}\Big) \left( x^3(1-x)z^{\beta-\frac{7}{6}} + (12x^2-6x)\left(z^{-\frac{7}{6}} + z^{-\alpha-\frac{7}{6}}\right) \right).
		$$
		Numerical results are presented in Table \ref{tab.10} and Figure \ref{fig.8}.
	\end{example}
	
	\begin{table}[!ht]
		\centering
		\caption{Temporal errors $\mathrm{Err}_\tau(N)$ at $t = 0.5$ for the 1-D nonhomogeneous problem with nonsmooth solution \eqref{exact}, shown for different values of $\alpha$ and $\beta$ with $h = 1/2^{10}$ and $\Lambda = 5$.}
		\label{tab.10}
		\renewcommand\arraystretch{1.5}
		\tabcolsep= 0.4 cm
		\begin{tabular}{c|cccccc}
			\Xhline{1.5pt}
			$N$ & ($\alpha,\beta$) & $\mathrm{Err}_\tau(N)$ & ($\alpha,\beta$) & $\mathrm{Err}_\tau(N)$ & ($\alpha,\beta$) & $\mathrm{Err}_\tau(N)$ \\ \hline
			10 & ~ & 1.6714e-03 & ~ & 1.9751e-03 & ~ & 1.7548e-03 \\
			20 & ~ & 2.8019e-05 & ~ & 3.0776e-05 & ~ & 2.0855e-05 \\
			30 & (0.25,0.4) & 1.4872e-07 & (0.5,0.6) & 1.3044e-07 & (0.75,0.8) & 1.7584e-07 \\
			40 & ~ & 1.2034e-08 & ~ & 2.0471e-08 & ~ & 1.2758e-08 \\
			60 & ~ & 1.9078e-08 & ~ & 1.3649e-08 & ~ & 1.1572e-08 \\
			\Xhline{1.5pt}
		\end{tabular}
	\end{table}
	
	\begin{figure}[htbp]
		\centering
		\begin{subfigure}[c]{0.45\textwidth}
			\includegraphics[width=1\textwidth]{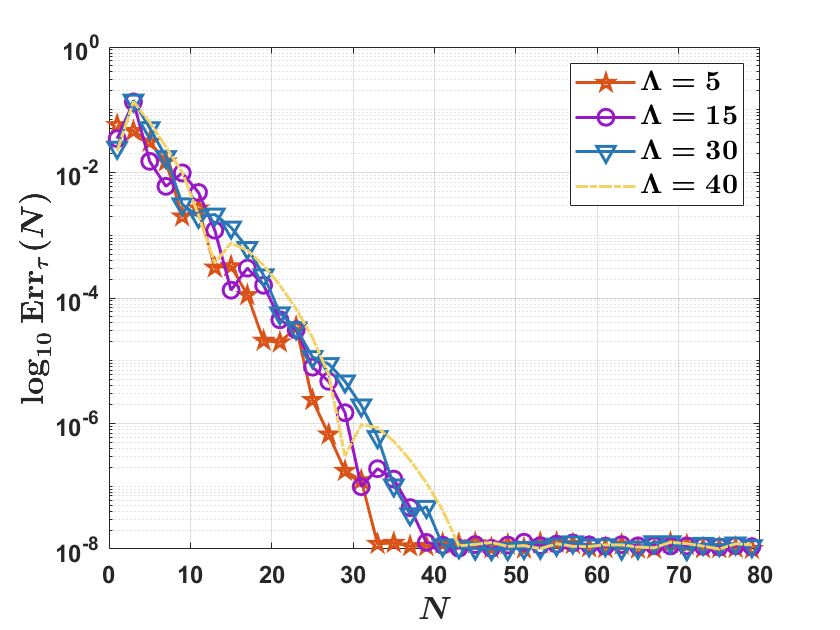}
			\caption{Absolute errors at $t = 0.5$ for different values of $\Lambda$, with $\alpha = 0.75$ and $\beta = 0.8$.}
		\end{subfigure}
		\hfill
		\begin{subfigure}[c]{0.45\textwidth}
			\includegraphics[width=1\textwidth]{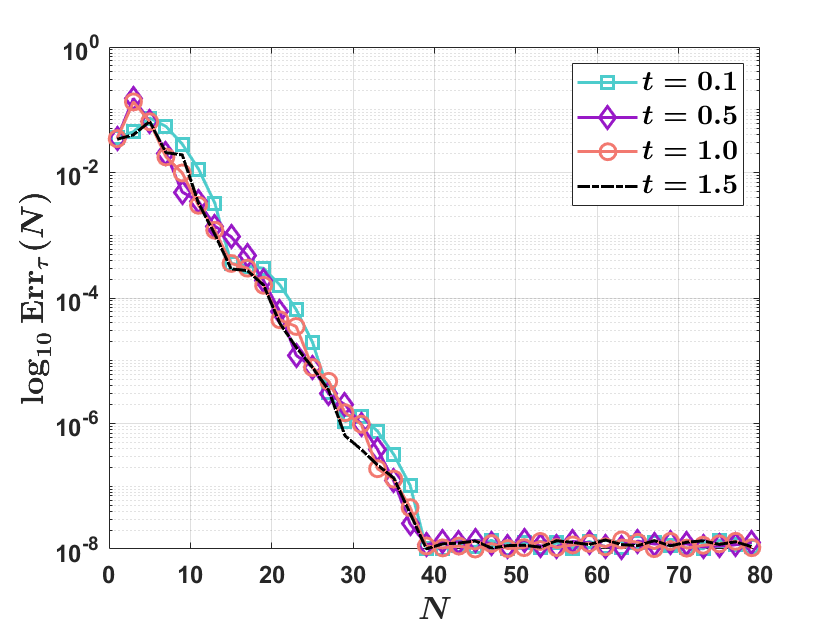}
			\caption{Absolute errors at different times for $\Lambda = 15$, with $\alpha = 0.25$ and $\beta = 0.4$.}
		\end{subfigure}
		\caption{Numerical performances for the 1-D nonhomogeneous problem with nonsmooth solution \eqref{exact}.}
		\label{fig.8}
	\end{figure}
	
	As shown in Table \ref{tab.10}, \textbf{\textit{the proposed algorithm maintains spectral convergence even for problems with nonsmooth solutions, demonstrating another superiority of our method.}} This observation is further supported by Figure \ref{fig.8}. Similar to the results for the scalar problem, the CIM remains stable across different values of \(\alpha\) and \(\beta\). While, different from the scalar case, as shown in these plots, the temporal error does not continue to decrease with increasing quadrature nodes
	$N$, due to the limiting effect of spatial error.
	
	\section{Conclusions}
	\label{section5}
	
	In this paper, we propose an efficient localization method with low regularity requirements for solving time-fractional integro-differential equations. A rigorous error analysis is provided, demonstrating that the scheme achieves spectral accuracy in the temporal direction. The localization of the nonlocal operator in time substantially improves computational efficiency and facilitates parallel computation in spatial approximations. The effectiveness and robustness of the proposed scheme are thoroughly validated through numerous numerical examples. These experiments, conducted on both one-dimensional and two-dimensional domains under various conditions, including nonsmooth initial data, vanishing initial values, as well as nonsmooth solutions, consistently confirm the scheme’s spectral accuracy, low regularity requirements, and minimal memory consumption. Future work will extend this approach to nonlinear problems, particularly semi-linear equations, building upon the foundation established here.
	


\begin{thebibliography}{10}
		\bibitem{Lazarov2015An}
		Bazhlekova, E., Jin, B., Lazarov, R., Zhou, Z.
		An analysis of the Rayleigh-Stokes problem for a generalized second-grade fluid.
		\textit{Numer. Math.}, 131: 1-31, 2015.
		\bibitem{berrut2004barycentric}
		Berrut, J.-P., Trefethen, L. N.
		Barycentric Lagrange interpolation.
		\textit{SIAM Rev.}, 46(3): 501-517, 2004.
		\bibitem{brenner2008mathematical}
		Brenner, S. C., Scott, L. R.
		\textit{The Mathematical Theory of Finite Element Methods}.
		3rd ed. Springer, New York, 2008.
		\bibitem{caputo2000models}
		Caputo, M.
		Models of flux in porous media with memory.
		\textit{Water Resour. Res.}, 36(3): 693-705, 2000.
		\bibitem{cuesta2006convolution}
		Cuesta, E., Lubich, C., Palencia, C.
		Convolution quadrature time discretization of fractional diffusion-wave equations.
		\textit{Math. Comp.}, 75(254): 673-696, 2006.
		\bibitem{deng2019high}
		Deng, W., Zhang, Z.
		\textit{High Accuracy Algorithm for the Differential Equations Governing Anomalous Diffusion: Algorithm and Models for Anomalous Diffusion}.
		World Scientific, Singapore, 2019.
		\bibitem{2008Some}
		Gorenflo, R., Mainardi, F.
		Some recent advances in theory and simulation of fractional diffusion processes.
		\textit{J. Comput. Appl. Math.}, 229(2): 400-415, 2009.
		\bibitem{in2011contour}
		In't Hout, K. J., Weideman, J. A. C.
		A contour integral method for the Black-Scholes and Heston equations.
		\textit{SIAM J. Sci. Comput.}, 33(2): 763-785, 2011.
		\bibitem{kato1961fractional}
		Kato, T.
		Fractional powers of dissipative operators.
		\textit{J. Math. Soc. Japan}, 13(3): 246-274, 1961.
		\bibitem{krasnoschok2017semilinear}
		Krasnoschok, M., Pata, V., Vasylyeva, N.
		Semilinear subdiffusion with memory in the one-dimensional case.
		\textit{Nonlinear Anal.}, 165: 1-17, 2017.
		\bibitem{krasnoschok2019semilinear}
		Krasnoschok, M., Pata, V., Vasylyeva, N.
		Semilinear subdiffusion with memory in multidimensional domains.
		\textit{Math. Nachr.}, 292(7): 1490-1513, 2019.
		\bibitem{lee2006parallel}
		Lee, J., Sheen, D.
		A parallel method for backward parabolic problems based on the Laplace transformation.
		\textit{SIAM J. Numer. Anal.}, 44(4): 1466-1486, 2006.
		\bibitem{lee2013laplace}
		Lee, H., Lee, J., Sheen, D.
		Laplace transform method for parabolic problems with time-dependent coefficients.
		\textit{SIAM J. Numer. Anal.}, 51(1): 112-125, 2013.
		\bibitem{lin2007finite}
		Lin, Y., Xu, C.
		Finite difference/spectral approximations for the time-fractional diffusion equation.
		\textit{J. Comput. Phys.}, 225(2): 1533-1552, 2007.
		\bibitem{li2021second}
		Li, X., Liao, H., Zhang, L.
		A second-order fast compact scheme with unequal time-steps for subdiffusion problems.
		\textit{Numer. Algorithms}, 86: 1011-1039, 2021.
		\bibitem{li2022exponential}
		Li, B., Ma, S.
		Exponential convolution quadrature for nonlinear subdiffusion equations with nonsmooth initial data.
		\textit{SIAM J. Numer. Anal.}, 60(2): 503-528, 2022.
		\bibitem{LopezFernndez2004OnTN}
		López-Fernández, M., Palencia, C.
		On the numerical inversion of the Laplace transform of certain holomorphic mappings.
		\textit{Appl. Numer. Math.}, 51(2-3): 289-303, 2004.
		\bibitem{lopez2006spectral}
		López-Fernández, M., Palencia, C., Schädle, A.
		A spectral order method for inverting sectorial Laplace transforms.
		\textit{SIAM J. Numer. Anal.}, 44(3): 1332-1350, 2006.
		\bibitem{luo2022numerical}
		Luo, Z., Zhang, X., Wang, S., Yao, L.
		Numerical approximation of time fractional partial integro-differential equation based on compact finite difference scheme.
		\textit{Chaos Solitons Fractals}, 161: 112395, 2022.
		\bibitem{ma2023analyses}
		Ma, F., Zhao, L., Deng, W., Wang, Y.
		Analyses of the contour integral method for time fractional normal-subdiffusion transport equation.
		\textit{J. Sci. Comput.}, 97(2): 45, 2023.
		\bibitem{Mahata2021FiniteEM}
		Mahata, S., Sinha, R. K.
		Finite element method for fractional parabolic integro-differential equations with smooth and nonsmooth initial data.
		\textit{J. Sci. Comput.}, 87(1): 7, 2021.
		\bibitem{mahata2023nonsmooth}
		Mahata, S., Sinha, R. K.
		Nonsmooth data error estimates of the L1 scheme for subdiffusion equations with positive-type memory term.
		\textit{IMA J. Numer. Anal.}, 43(3): 1742-1778, 2023.
		
		\bibitem{martensen1968numerischen}
		Martensen, E.
		Zur numerischen auswertung uneigentlicher integrale.
		\textit{Z. Angew. Math. Mech}, 48:T83-T85, 1968.
		
		\bibitem{stenger2012numerical}
		Stenger, F.
		\textit{Numerical methods based on Sinc and analytic functions},
		Volume 20.
		Springer Science \& Business Media, 2012.
		
		\bibitem{mclean2010maximum}
		McLean, W., Thomée, V.
		Maximum-norm error analysis of a numerical solution via Laplace transformation and quadrature of a fractional-order evolution equation.
		\textit{IMA J. Numer. Anal.}, 30(1): 208-230, 2010.
		\bibitem{McLean2006ASA}
		McLean, W., Mustapha, K.
		A second-order accurate numerical method for a fractional wave equation.
		\textit{Numer. Math.}, 105(3): 481-510, 2006.
		\bibitem{mohebbi2017compact}
		Mohebbi, A.
		Compact finite difference scheme for the solution of a time fractional partial integro-differential equation with a weakly singular kernel.
		\textit{Math. Methods Appl. Sci.}, 40(18): 7627-7639, 2017.
		\bibitem{mosqueira2020antibody}
		Mosqueira, A., Camino, P. A., Barrantes, F. J.
		Antibody-induced crosslinking and cholesterol-sensitive, anomalous diffusion of nicotinic acetylcholine receptors.
		\textit{J. Neurochem.}, 152(6): 663-674, 2020.
		\bibitem{mustapha2020l1}
		Mustapha, K.
		An L1 approximation for a fractional reaction-diffusion equation, a second-order error analysis over time-graded meshes.
		\textit{SIAM J. Numer. Anal.}, 58(2): 1319-1338, 2020.
		\bibitem{2009Discontinuous}
		Mustapha, K., McLean, W.
		Discontinuous Galerkin method for an evolution equation with a memory term of positive type.
		\textit{Math. Comp.}, 78(268): 1975-1995, 2009.
		
		\bibitem{Podlubny:1999}
		Podlubny, I. Fractional Differential Equations. Academic Press, San Diego, 1999.
		\bibitem{Riewe1997Mechanics}
		Riewe, F.
		Mechanics with fractional derivatives.
		\textit{Phys. Rev. E}, 55(3): 3581-3592, 1997.
		\bibitem{sheen2003parallel}
		Sheen, D., Sloan, I. H., Thomée, V.
		A parallel method for time discretization of parabolic equations based on Laplace transformation and quadrature.
		\textit{IMA J. Numer. Anal.}, 23(2): 269-299, 2003.
		\bibitem{stynes2017error}
		Stynes, M., O'Riordan, E., Gracia, J. L.
		Error analysis of a finite difference method on graded meshes for a time-fractional diffusion equation.
		\textit{SIAM J. Numer. Anal.}, 55(2): 1057-1079, 2017.
		\bibitem{2006A}
		Sun, Z. Z., Wu, X.
		A fully discrete difference scheme for a diffusion-wave system.
		\textit{Appl. Numer. Math.}, 56(2): 193-209, 2006.
		\bibitem{talbot1979accurate}
		Talbot, A.
		The accurate numerical inversion of Laplace transforms.
		\textit{IMA J. Appl. Math.}, 23(1): 97-120, 1979.
		\bibitem{wang2021sharp}
		Wang, Z., Cen, D., Mo, Y.
		Sharp error estimate of a compact L1-ADI scheme for the two-dimensional time-fractional integro-differential equation with singular kernels.
		\textit{Appl. Numer. Math.}, 159: 190-203, 2021.
		\bibitem{weideman2006optimizing}
		Weideman, J. A. C.
		Optimizing Talbot’s contours for the inversion of the Laplace transform.
		\textit{SIAM J. Numer. Anal.}, 44(6): 2342-2362, 2006.
		\bibitem{weideman2007parabolic}
		Weideman, J. A. C., Trefethen, L. N.
		Parabolic and hyperbolic contours for computing the Bromwich integral.
		\textit{Math. Comp.}, 76(259): 1341-1356, 2007.
		\bibitem{weideman2019gauss}
		Weideman, J. A. C.
		Gauss-Hermite quadrature for the Bromwich integral.
		\textit{SIAM J. Numer. Anal.}, 57(5): 2200-2216, 2019.
		\bibitem{zaeri2017fractional}
		Zaeri, S., Saeedi, H., Izadi, M.
		Fractional integration operator for numerical solution of the integro-partial time fractional diffusion heat equation with weakly singular kernel.
		\textit{Asian-Eur. J. Math.}, 10(4): 1750071, 2017.
		\bibitem{zhou2020alternating}
		Zhou, J., Xu, D.
		Alternating direction implicit difference scheme for the multi-term time-fractional integro-differential equation with a weakly singular kernel.
		\textit{Comput. Math. Appl.}, 79(2): 244-255, 2020.
		
	\end{thebibliography}

\end{document}